\title{ Hilbert's Proof\\ 
of His Irreducibility Theorem}
\author{
Mark B. Villarino, 
William Gasarch,
and Kenneth W. Regan}
\date{\today}
\theoremstyle{plain}
\newtheorem{thm}{Theorem}
\newtheorem{prop}[thm]{Proposition}
\newtheorem{lemma}[thm]{Lemma}
\newtheorem{cor}[thm]{Corollary}
\theoremstyle{definition}
\newtheorem{definition}{Definition}
\newcommand{\Erdos}{Erd{\H{o}}s }
\renewcommand{\geq}{\geqslant}  %% (to save typing)
\renewcommand{\leq}{\leqslant}  %% (to save typing)
\newcommand{\hideqed}{\renewcommand{\qed}{}} %% to suppress `\qed'
\newcommand{\Cee}{\mathbb{C}}   %% complex numbers
\newcommand{\Nat}{\mathbb{N}}   %% natural numbers
\newcommand{\Que}{\mathbb{Q}}   %% rational numbers
\newcommand{\Zed}{\mathbb{Z}}   %% integer numbers
\begin{document}

\maketitle

\thispagestyle{empty} %% suppress numbering on page 1

%%KWR531: i restored the abstract, took out my florid use of "heating empty air" which had been for a house-building analogy, and changed what's said about Ramsey Theory since we have the new section.

%%KWR531: The abstract says "Cube Lemma" and the compositor let that stand but wanted lowercase "cube lemma" everywhere in the body.  I think this is actually somehow right: the abstract has the first mention of it and is proclaiming it in some sense. I did keep the usage in the body consistent.  Same thing with "Hilbert's Irreducibility Theorem" and "Ramsey Theory" being left alone here.

\begin{abstract} 
Hilbert's irreducibility theorem is a cornerstone that joins areas of
analysis and number theory. Both the genesis and genius of its proof
involved combining real analysis and combinatorics. We try to expose
the motivations that led Hilbert to this synthesis. 
% Hilbert's famous Cube Lemma supplied fuel for the proof but without the analytical foundation and framework it would have been heating empty air.  
His famous cube lemma anchored the proof but without the analytical foundation and framework it would have had no purpose.  
% The lemma presages Ramsey Theory but we note differences in motivation.
We also assess this lemma as a precursor of Ramsey theory.
\end{abstract}

\section{Introduction.}
\label{sec:intro}

In 1892, David Hilbert 
% published a proof of the following theorem,
%%KWR531: Long ago, I decided not to criticize the above on grounds that Hilbert didn't just publish a proof, because our paper is emphasizing the proof.  But now, saving the one line needed to get "A more modern formulation is:" onto page 1 is THE MAGIC PAGEBREAK SAVER.  (Well OK, we could alternately save 4 words in the paragraph beginning "He began his paper" to achieve the same effect.)
published what is 
known today as \emph{Hilbert's irreducibility theorem}.
%, which he stated as follows.
%%KWR: Since the footnote is not by Hilbert, but we're calling it "his statement", I took it out.
We give his statement, using \emph{integral polynomial} to mean a polynomial in any number of variables whose coefficients are integers.

\begin{thm} % 1
\label{irreducibility}
If $F(x,y,\dots,w;t,r,\dots,q)$ is an irreducible polynomial with 
integral coefficients
% 
% \footnote{A polynomial in any number of variables whose coefficients
% are integers will be called an \emph{integral} polynomial.
% }
% 
in the variables $x,y,\dots,w$ and the parameters $t,r,\dots,q$, then
it is always possible, and indeed in infinitely many ways, to
substitute integers for the parameters $t,r,\dots,q$ such that the
polynomial $F(x,y,\dots,w;t,r,\dots,q)$ becomes an irreducible
polynomial in the variables $x,y,\dots,w$ alone.
\end{thm}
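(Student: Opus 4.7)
The plan is to reduce to the case of a single variable $x$ and a single parameter $t$, and then exploit the tension between algebraic factorizations of $F$ in $\Zed[x,t]$ and the infinitely many integer specializations at which factorizations would have to occur if the theorem failed. The reduction to one parameter is by induction: once we can preserve irreducibility by specializing a single parameter, and moreover can do so along a set of integers rich enough to support further specialization, we can peel off the parameters $t,r,\dots,q$ one at a time. The reduction to a single $x$-variable is a classical substitution device (of Kronecker type, $y\mapsto x^N$, $\dots$, $w\mapsto x^{N^{k-1}}$ for suitably large $N$), which converts irreducibility in the block $x,y,\dots,w$ into irreducibility in $x$ alone while keeping coefficient magnitudes and $t$-degrees under control.

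Assume then that $F(x;t)\in\Zed[x,t]$ is irreducible and of degree $n$ in $x$. The main body of the argument concerns the $n$ algebraic functions $\xi_1(t),\dots,\xi_n(t)$ that are the roots of $F(x;t)=0$ over $\Que(t)$. Each admits a Puiseux expansion in $t^{-1/e}$ valid for $|t|$ large, and the elementary symmetric functions $\sigma_k^{(S)}(t)$ in the roots belonging to any subset $S\subseteq\{1,\dots,n\}$ are themselves algebraic functions of $t$. Since $F$ is irreducible, the Galois group acts transitively, so no proper $S$ makes all $\sigma_k^{(S)}(t)$ rational functions of $t$. For an integer $t=t_0$, a nontrivial factorization $F(x;t_0)=g_1(x)g_2(x)$ in $\Zed[x]$ singles out such a subset $S$ whose symmetric functions at $t_0$ must be rational integers. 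Because there are only finitely many candidate $S$, if Theorem~\ref{irreducibility} failed then some fixed $S$ would be realized for an infinite set $T\subseteq\Zed$.

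The decisive step is extracting a contradiction from this infinite set $T$. Here Hilbert's cube lemma enters: $T$ must contain arbitrarily large affine cubes $\{t_0+\sum_{i\in I}a_i : I\subseteq\{1,\dots,k\}\}$, and along such an arithmetically structured family the Puiseux tails of $\sigma_k^{(S)}$ can be estimated uniformly. The analytic core of the proof is a diophantine estimate showing that a genuinely algebraic (non-rational) function of $t$ cannot agree with integers to the precision forced by the cube structure; otherwise its defining polynomial would split, producing a rational-function value and, after clearing denominators, a factor of $F(x;t)$ in $\Zed[x,t]$, contradicting irreducibility.

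The step I expect to be the main obstacle is calibrating the cube lemma against the analytic growth of the Puiseux tails. The dimension $k$ of the affine cube we need depends on the degree $n$ of $F$ and on the ramification indices $e$ governing the Puiseux expansions, because the number of Taylor-like coefficients we must pin down scales with these data. Getting the combinatorial density of $T$ to beat the analytic error bars is the heart of the argument; everything else---the reductions at the beginning and the final passage from ``rational value of an algebraic function'' back to ``factor of $F$''---is essentially bookkeeping once that balance is struck.
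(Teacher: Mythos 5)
Your core argument for the one-variable, one-parameter case is the paper's: expand the $n$ roots of $F(x;t)=0$ in Puiseux series at infinity, note that a factorization of the specialized polynomial in $\Zed[x]$ selects one of the $2^n-2$ proper subsets $S$ of roots, pigeonhole an $S$ that recurs infinitely often, and use the cube lemma to put additive structure on the recurrences so that the Puiseux tails of the symmetric functions of $S$ force a contradiction. Three corrections to how you describe that mechanism. First, it is not a matter of ``combinatorial density beating analytic error bars'': the cube of dimension $m$ supplies exactly the $m$ increments needed so that iterated finite differences $\mathcal{P}\mapsto\mathcal{P}(\sigma)-\mathcal{P}(\sigma+\mu^{(i)})$ annihilate the degree-$(m-1)$ polynomial part \emph{exactly}, leaving only the tail, which is then an integer of absolute value less than $1$, hence zero; no density enters. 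Second, an infinite set $T\subseteq\Zed$ need \emph{not} contain large affine cubes (the powers of $2$ contain no $2$-cube), so you cannot extract cubes from $T$; the lemma must be applied to the \emph{coloring} of $\sigma$ by factor-index, and the monochromatic factor you end up with may differ from the one you pigeonholed. Third, the passage back to a factor of $F$ is not bookkeeping: you still need the monic normalization $y=Tx$, $g=T^{n-1}f(y/T,t)$ plus Gauss's lemma (without it the roots at $t=t_0$ are not algebraic integers and the subset's symmetric functions need not be integers), then Cramer's rule for rationality of the polynomial-part coefficients, and a two-primes argument to exclude fractional powers of $t$.

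The step that fails outright is your reduction of the variables. Kronecker substitution $y\mapsto x^N,\dots$ preserves \emph{reducibility}, not irreducibility: $x^2+y^2$ is irreducible over $\Que$, yet $x^2+x^{2N}=x^2(1+x^{2N-2})$ is reducible for every $N\geq 2$. So from $F$ irreducible in the block $x,y,\dots,w$ you cannot conclude that the substituted one-variable polynomial is irreducible, and the one-variable theorem then gives you nothing to specialize. (The classical reductions instead introduce new \emph{parameters}, e.g.\ $y=u+vx$, or argue separately that a generic hyperplane restriction stays irreducible.) Your induction on the parameters, peeling them off one at a time by viewing all but one as additional variables, is sound.
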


% This is a literal translation from Hilbert's original paper.  It does not meet modern standards.  
%%KWR: "It" seems to refer to the translation.
The statement of Theorem \ref{irreducibility} is a direct translation from \cite{Hilbert1892}.  It falls short of modern precision.    
For example, the irreducibility of $F$ concerns the polynomial in the whole set of variables---parameters included---but the statement is technically false if there are no variables but only parameters. Nor is it clear whether one needs a lot of variables and parameters or whether proving the theorem for one or two of each suffices; this is clarified below.
One of our purposes is to lead readers to appreciate modern rigor and clarity compared to 19th century standards.

% However our purpose is to take the reader from 19th century rigor to modern mathematics so as to appreciate the change.
%%KWR: I feel no paragraph break and no "However" is best for flow.  $...$ around F and period were missing.

To Hilbert, this theorem was not an end in itself but rather a tool to
use for some remarkable applications. A simple one is that if a
polynomial $f(x)$ over $\Zed$ has values that are perfect squares for
all sufficiently large $x$, then $f(x)$ must be the square of some
other polynomial over $\Zed$. One of his most striking results was the following.

%%KWR531: The compositor not only struck out "ones" but also suggested inserting "results".  I also changed "is" to "was" here---we're slicing between past and present anyway, and I felt it sounded better (or rather, I feel it sounds better).  Anyway, this prompted me to be hawkeye on all the edits.

\begin{cor} % 2
For every integer $n\geq 1$ there exist infinitely many polynomials $p$  in
$\Zed[x]$ of degree $n$ such that $p$ has the symmetric group $S_n$ as its Galois
group.
\end{cor}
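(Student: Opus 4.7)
The plan is to exhibit a ``generic'' polynomial whose Galois group over the rational function field in its coefficients is $S_n$, and then invoke Theorem~\ref{irreducibility} to specialize the coefficients to integers while preserving that group.

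Introduce $n$ indeterminates $y_1,\dots,y_n$ with elementary symmetric functions $s_1,\dots,s_n$, and form the generic monic polynomial
\[
f(x;s_1,\dots,s_n) \;:=\; \prod_{i=1}^{n}(x-y_i) \;=\; x^n - s_1 x^{n-1} + \cdots + (-1)^n s_n.
\]
Over $K:=\Que(s_1,\dots,s_n)$ the polynomial $f$ is irreducible, its splitting field is $L:=\Que(y_1,\dots,y_n)$, and $\mathrm{Gal}(L/K)$ is the full symmetric group $S_n$, permuting the $y_i$. Take as primitive element $\theta := y_1 + 2y_2 + \cdots + n y_n$; its $n!$ conjugates under $S_n$ are pairwise distinct, so its minimal polynomial $g(z;s_1,\dots,s_n) \in \Zed[s_1,\dots,s_n,z]$ is monic of degree $n!$ and irreducible.

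Now apply Theorem~\ref{irreducibility} to $g$, with $z$ in the role of the variable and $s_1,\dots,s_n$ as the parameters. This yields infinitely many integer tuples $(a_1,\dots,a_n)\in\Zed^n$ for which the specialization $g_a(z):=g(z;a_1,\dots,a_n)$ is irreducible in $\Que[z]$, necessarily of degree $n!$. Set $p(x):=f(x;a_1,\dots,a_n)\in\Zed[x]$ and let $L_a$ be its splitting field over $\Que$. If $\alpha_1,\dots,\alpha_n\in L_a$ are the roots of $p$, then $\beta := \alpha_1 + 2\alpha_2 + \cdots + n\alpha_n \in L_a$ is a root of $g_a$, so $[\Que(\beta):\Que]=n!$ and hence $[L_a:\Que]\geq n!$. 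Since $[L_a:\Que]\leq n!$ always holds for the splitting field of a degree-$n$ polynomial, we conclude $[L_a:\Que]=n!$ and therefore $\mathrm{Gal}(L_a/\Que)=S_n$. Distinct tuples $(a_1,\dots,a_n)$ yield distinct polynomials $p$, producing infinitely many examples.

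The main obstacle is conceptual rather than computational: one must verify that $\theta$ really is a primitive element with minimal polynomial of the full degree $n!$ over $K$, and then appreciate why irreducibility of a \emph{single} auxiliary polynomial $g_a$ forces the Galois group of $p$ all the way up to $S_n$. A naive application of Theorem~\ref{irreducibility} directly to $f$ would only guarantee that $p$ is irreducible, placing its Galois group among the transitive subgroups of $S_n$; the trick of introducing $g$ as the minimal polynomial of the primitive element is precisely what promotes ``transitive'' to ``the full $S_n$.''
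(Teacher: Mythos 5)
Your proof is correct. The paper does not actually prove this corollary---it states it only as one of Hilbert's striking applications of Theorem~\ref{irreducibility}---so there is no in-paper argument to compare against; what you give is the classical derivation (essentially Hilbert's own): pass from the generic degree-$n$ polynomial to the degree-$n!$ resolvent $g$ of a primitive element $\theta=\sum_i i\,y_i$, apply the irreducibility theorem to $g$, and observe that irreducibility of the specialized resolvent forces the splitting field of the specialized $p$ to have degree $n!$, hence Galois group $S_n$. Two small points are worth making explicit. First, the $n!$ conjugates $\sum_i i\,y_{\pi(i)}$ are pairwise distinct because the $y_i$ are algebraically independent: rewriting the sum as $\sum_j \pi^{-1}(j)\,y_j$ shows distinct permutations give distinct elements, so $\theta$ really has degree $n!$ over $K$. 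Second, Theorem~\ref{irreducibility} requires $g$ to be irreducible as an element of $\Zed[s_1,\dots,s_n,z]$, not merely over $\Que(s_1,\dots,s_n)$; this follows from Gauss's lemma since $g$ is monic in $z$. You correctly identify the key conceptual move: applying the theorem directly to $f$ would only yield transitivity of the Galois group, whereas applying it to the degree-$n!$ resolvent pins the group all the way up to $S_n$.
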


He began his paper \cite{Hilbert1892} with a statement and proof of
the two-variable case, which is the fundamental step in the proof of
the general theorem. Again in Hilbert's own words, the statement is:

\begin{thm} % 3
\label{irreducibility-1}
If $f(x,t)$ is an irreducible polynomial in the two variables $x$ and
$t$ with integral coefficients
\begin{equation}\label{integral-T}
f(x,t) = T x^n + T_1 x^{n-1} +\cdots+ T_n,
\end{equation}
where $T,T_1,\dots,T_n$ are integral polynomials in $t$, it is always
possible, indeed in infinitely many ways, to substitute an integer for
$t$ in $f(x,t)$ such that the polynomial $f(x,t)$ becomes an
irreducible polynomial of the single variable~$x$.
\end{thm}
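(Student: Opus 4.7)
My approach combines an analytic description of the roots of $f(x,t)=0$ as Puiseux series in $t$ with a combinatorial pigeonhole argument---culminating in Hilbert's cube lemma---applied to the set of integer $t_0$ at which $f(x,t_0)$ factors.  For $|t|$ larger than some $R$, the $n$ roots $x_1(t),\dots,x_n(t)$ of $f(x,t)=0$ admit convergent Puiseux expansions in $t^{-1/e}$ for some common ramification index $e$.  If $f(x,t_0)=A(x)B(x)$ is a nontrivial factorization over $\Zed$ for an integer $t_0>R$, then the roots split into two nonempty subsets $I\sqcup J$, and the coefficients of $A$ and $B$ are the values at $t_0$ of the elementary symmetric functions $\sigma_k^I(t)$ of $\{x_i(t):i\in I\}$.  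Because $f$ is irreducible in $\Zed[x,t]$ and hence (by Gauss's lemma) in $\Que(t)[x]$, for every proper $I$ some $\sigma_k^I(t)$ must be a \emph{genuine}, non-rational Puiseux series---otherwise $\prod_{i\in I}(x-x_i(t))$ would yield a nontrivial factorization of $f$ over $\Que(t)$.

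Suppose for contradiction that $f(x,t_0)$ is reducible for all sufficiently large integers $t_0$.  Color each such $t_0$ by the pair $(m,I)$ recording the degree $m=\deg A$ and the subset $I\subseteq\{1,\dots,n\}$ of root indices belonging to the degree-$m$ factor.  Only finitely many colors are possible, so pigeonhole yields an infinite monochromatic class $B_I\subseteq\Nat$.  Hilbert's cube lemma then provides, for every $N$, an affine cube
\[
 C_N \;=\; \Bigl\{\, a_0 + \textstyle\sum_{i\in S} a_i \;:\; S\subseteq\{1,\dots,N\} \,\Bigr\} \;\subseteq\; B_I,
\]
a set on which $f$ factors with the \emph{same} root partition $(I,J)$ throughout.

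To finish, I would run a finite-difference argument on $C_N$.  Since every value $\sigma_k^I(t_0)$ with $t_0\in C_N$ is an integer, the $N$-fold mixed difference
\[
 \Delta_{a_1}\!\cdots\Delta_{a_N}\,\sigma_k^I(a_0) \;=\; \sum_{S\subseteq\{1,\dots,N\}} (-1)^{N-|S|}\,\sigma_k^I\!\Bigl(a_0 + \textstyle\sum_{i\in S} a_i\Bigr)
\]
is itself an integer.  On the other hand, substituting the convergent Puiseux expansion and pushing $a_0$ far enough toward infinity forces this difference to have absolute value less than $1$ once $N$ exceeds the degree in $t$ of the polynomial part of $\sigma_k^I$.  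Hence the difference vanishes, which forces every $\sigma_k^I(t)$ to be a polynomial in $t$---contradicting the first paragraph.  The principal obstacle is the execution of this last step: taming the fractional exponents $t^{1/e}$ (by restricting $C_N$ to cubes of $e$-th powers, or working throughout in the uniformizer $s=t^{1/e}$) and obtaining uniform tail estimates valid across the entire cube.  Hilbert's cube lemma is indispensable precisely here: its arithmetic closure under partial sums is exactly the structure the finite-difference calculus demands, and a generic infinite bad set would not suffice.
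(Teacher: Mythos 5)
Your plan has the right skeleton---Puiseux expansions, identifying the coefficients of a putative factor with the symmetric functions of a subset of roots, a pigeonhole/cube-lemma argument over the large set of bad $t_0$, and a finite-difference estimate that makes an ``integer of absolute value less than $1$'' vanish. That is genuinely Hilbert's strategy, and you correctly flag the fractional-exponent issue as the danger spot. But three things that the paper handles carefully are either missing or not yet correct in your sketch.

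First, you need a reduction to the \emph{monic} case before the coefficients of the integral factor $A(x)$ of $f(x,t_0)$ can be equated with $\sigma_k^I(t_0)$. If $f(x,t)=Tx^n+\cdots$ with $T=T(t)$ nonconstant, then $f(x,t_0)=T(t_0)\prod_i(x-x_i(t_0))$ and any integer factorization distributes $T(t_0)$ in a $t_0$-dependent way between the pieces; the coefficients of $A$ are then \emph{not} the values $\sigma_k^I(t_0)$. Hilbert first replaces $f$ by $g(y,t)=T^{n-1}f(y/T,t)$, uses Gauss's lemma to move between $\Zed$- and $\Que$-factorizations, and only then runs the Puiseux/symmetric-function machinery (this is Proposition~\ref{prop:Que2Zed} and the surrounding discussion).

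Second, the finite-difference argument as you write it differences in the variable $t$, but $\sigma_k^I$ is a Puiseux series in $\tau=t^{1/k}$, and $t$-differences do not annihilate a nonconstant polynomial in $\tau$. Your proposed fix of ``restricting $C_N$ to cubes of $e$-th powers'' does not obviously work, because the set of $e$-th powers of a cube is not itself a cube. Hilbert's fix is different: fix a single base point $\tau_0=t_0^{1/k}$ (usually irrational), substitute $\tau=\sigma\tau_0$ with $\sigma\in\Nat$, and rewrite the expansion as a Laurent-type series in the \emph{integer} variable $\sigma$. The reducibility hypothesis is then invoked at $t=\sigma^k t_0$, the cube lemma is applied to the coloring of $\sigma$, and the differences $\mathcal{P}^{(j)}(\sigma)$ genuinely lower the degree of the polynomial part in $\sigma$.

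Third, killing the negative-power tail is not yet a contradiction. After Section~\ref{sec:neg}'s argument, you only know that $\sigma_k^I$ collapses to a polynomial in $\tau$ with \emph{a priori complex} coefficients; this is not yet a factorization over $\Que(t)$. Two further steps remain: the surviving polynomial-in-$\sigma$ has rational coefficients because it takes integer values on $m$ integer points (Cramer's rule, Section~\ref{sec:rat}); and the fractional powers of $t$ (i.e.\ the powers of $\tau$ not divisible by $k$) must have zero coefficient, which the paper proves by rerunning the argument for $2^n-1$ distinct primes $\tau_0$, pigeonholing to find two primes $p,p'$ giving the same formal factor, and then using uniqueness of Puiseux coefficients together with the irrationality of $p^r/(p')^r$ for non-integer rational $r$ (Section~\ref{sec:int}). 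Your proposal jumps straight from ``tail vanishes'' to ``$\sigma_k^I\in\Que[t]$,'' so these two steps need to be supplied.
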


%%KWR 906: NEW material here---redirecting the referees' comments at Hilbert.  Then *changed* to use t_1.
\noindent 
This statement also has several issues.  In just eight words of the last sentence, ``$t$'' is first a variable symbol, then part of the name ``$f(x,t)$'' for $f$, and last a constant substituted into $f(x,t)$.  It is hard to avoid the juggling of different meanings that Hilbert expected of his readers, but we will use ``$t_0$'' and ``$t_1$'' to distinguish constants.  Often $t_0$ is a threshold and $t_1 \geq t_0$.  Second, the simpler statement does not clarify what happens when there is no dependence on the variable $x$.  The set $\Zed[x]$ of polynomials $f$ in one variable $x$ having coefficients in $\Zed$ includes the constant polynomials $f(x) = 0$, $f(x) = 1$, $f(x) = -1$, and so on.  Likewise, $f(x,t) = t^2 + t + 2$ counts as a member of $\Zed[x,t]$.  It satisfies the hypotheses because it is irreducible and because the choices $T = T_1 = \dots = T_{n-1} = 0$ and $T_n = t^2 + t + 2$ count as ``integral polynomials in $t$.''  But the conclusion is false because for every integer $t_1$, the \emph{constant} $t_1^2 + t_1 + 2$ is an even number, so the constant \emph{polynomial} $f'(x) = f(x,t_1) = t_1^2 + t_1 + 2$ is reducible as a member of $\Zed[x]$.\footnote{
If this objection seems trivializing, consider instead the polynomial $g(t) =  t^2 + 1$, which is likewise irreducible.  Whether $g(t)$ takes on infinitely many prime values is one of four problems presented by Edmund Landau to the 1912 International Congress of Mathematicians---all still unsolved---and it must have been outside the boundary of what Hilbert's words were meant to embrace in 1892.  Thus Hilbert's theorem borders on matters of lasting depth.
} %% end of footnote %%KWR906: One can delete my short last sentence here.

We will see at the start of Section~\ref{sec:pseries} how the proof needs $f(x,t_1)$ to have at least one (complex) root for all but finitely many $t_1$.  For this it suffices
%there to be infinitely many $t_1$ such that $f(x,t_1)$ is unbounded in $x$. 
%%KWR906: Rewritten per Bill's latest comments.  Is my line above any weaker? sufficient?  See also my long comment at the head of the Puiseux section.  I've tried to tiptoe around a full formal definition of "degree of x in a two-variable polynomial f(x,t)" and also avoid the painful change of letters Bill mentioned.
that the two-variable polynomial $f(x,t)$ is not independent of $x$, so that $x$ has degree at least $1$.  The crispest modern way we know to say this is ``$f \in \Zed[x,t] \setminus \Zed[t]$,'' where $\setminus$ means difference of sets.  This yields the following statement.

%A more modern formulation is:

\begin{thm} % 4
\label{irreducibility-2}
%Let $f(x,y) \in \Zed[x,y]$ in which the degree in $x$ is at least $1$ be irreducible. For an infinite number of $t \in \Zed$, $f(x,t)$, as an element of $\Zed[x]$, is irreducible.
Let $f(x,y) \in \Zed[x,y] \setminus \Zed[y]$. For an infinite number of $t_1 \in \Zed$, $f(x,t_1)$, as an element of $\Zed[x]$, is irreducible.
\end{thm}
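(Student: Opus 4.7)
The plan is to argue by contradiction: suppose there are only finitely many integers $t_1$ at which $f(x, t_1)$ is irreducible in $\Zed[x]$. Writing $f$ in the form (\ref{integral-T}) with leading coefficient $T(t) \not\equiv 0$ and $n \geq 1$ (the latter because $f \notin \Zed[t]$), we may restrict attention to integers $t_1$ with $|t_1|$ exceeding some threshold $t_0$, where $T(t_1) \neq 0$ and $f(x, t_1)$ has exactly $n$ complex roots. By assumption, every such $f(x, t_1)$ then admits a proper factorization in $\Zed[x]$.

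The first substantive step is to track the roots analytically. By Newton--Puiseux theory, on the region $|t| > t_0$ (enlarging $t_0$ if necessary) there exist $n$ analytic branches $\xi_1(t), \dots, \xi_n(t)$, each expressible as a convergent fractional Laurent series in $t^{-1/e}$ for some common $e \geq 1$, whose values at each integer $t_1 > t_0$ are precisely the roots of $f(x, t_1)$. For each such $t_1$ the assumed factorization partitions $\{1, \dots, n\}$ into the root-sets of the factors. There are only finitely many partitions of $\{1, \dots, n\}$, so pigeonhole produces a single partition $\pi = \{A_1, \dots, A_r\}$ with $r \geq 2$ and an infinite subset $S \subseteq \Zed$ of $t_1$ realizing it. For each part $A_j$ and each $k$, the elementary symmetric polynomial $\sigma_{j,k}(t) := e_k(\{\xi_i(t) : i \in A_j\})$ is again a convergent fractional power series in $t^{-1/e}$ whose values, after multiplication by the appropriate power of $T(t_1)$, are rational integers for every $t_1 \in S$.

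The heart of the argument, and the main obstacle, is to upgrade ``integer values on the infinite set $S$'' into ``each $\sigma_{j,k}(t)$ is a polynomial in $t$ with rational coefficients.'' This is the role of Hilbert's cube lemma: any sufficiently rich infinite set $S$ contains \emph{affine cubes} $\bigl\{a_0 + \sum_{i \in I} a_i : I \subseteq \{1,\dots,d\}\bigr\}$ of arbitrary dimension $d$, and the integrality of $\sigma_{j,k}$ at all $2^d$ vertices of such a cube, combined with careful tail-bounds on its Puiseux expansion, rules out every fractional-exponent term and truncates the resulting Laurent expansion to finitely many non-negative powers of $t$. Once the $\sigma_{j,k}(t)$ are known to be polynomials, the formal product $g(x,t) := \prod_{i \in A_1}(x - \xi_i(t))$ lies in $\Que(t)[x]$ and is a proper factor of $f(x,t)/T(t)$; clearing denominators and applying Gauss's lemma produces a proper factorization of $f$ in $\Zed[x,t]$, contradicting its irreducibility. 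Thus $f(x, t_1)$ must be irreducible for infinitely many $t_1 \in \Zed$.
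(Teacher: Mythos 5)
Your outline is Hilbert's own strategy (Sections~\ref{sec:monic}--\ref{sec:int}): Puiseux expansions at infinity, pigeonhole on the finitely many factorization types, the cube lemma to force the coefficient series to collapse to polynomials, and Gauss's lemma to descend to a factorization in $\Zed[x,t]$. The genuine gap is in how you invoke the cube lemma. You first pigeonhole to get an infinite set $S\subseteq\Zed$ realizing one partition $\pi$, and then claim that ``any sufficiently rich infinite set $S$ contains affine cubes of arbitrary dimension.'' That claim is false for the kind of set your pigeonhole produces: an infinite color class of a finite coloring of $\Nat$ can be as sparse as $\{2^j : j\in\Nat\}$, which contains no quadruple $a_0,\ a_0+a_1,\ a_0+a_2,\ a_0+a_1+a_2$, hence not even a $2$-cube. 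Lemma~\ref{le:cube} is a statement about \emph{colorings}, and it must be applied in the opposite order: since reducibility is assumed at \emph{every} sufficiently large $t_1$, every such $t_1$ receives one of the $2^n-2$ colors, and the lemma then chooses the color (i.e., the factor) for you---possibly not your $\pi$; indeed the paper notes that the final factor need not be the one first pigeonholed. The lemma also delivers something you silently need later: for each $m$, \emph{fixed} increments $\mu^{(1)},\dots,\mu^{(m)}$ that work for \emph{infinitely many} base points. One cube per dimension is not enough, because the contradiction in Section~\ref{sec:neg} comes from the $m$-fold differenced series $\mathcal{P}^{(m)}(\sigma)$ taking integer values at arbitrarily large $\sigma$ while tending to~$0$.

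A lesser issue: you attribute all three collapse conditions of Theorem~\ref{thm:nub} to ``the cube lemma plus tail bounds.'' In the paper the differencing argument along the cube kills only the strictly negative powers; rationality of the coefficients of the polynomial part is a separate interpolation (Cramer's rule) step in Section~\ref{sec:rat}, and the elimination of positive fractional powers of $t$ uses a separate pigeonhole over $2^n-1$ prime base points in Section~\ref{sec:int}. A direct differencing argument in $t$ could plausibly handle non-integral positive exponents as well (differencing $t^a$ for non-integral $a>0$ never terminates at a constant and eventually yields a nonzero leading term of negative exponent), but you would have to carry this out; as written these steps are asserted rather than proved.
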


\noindent 
In fact, Hilbert proved the \emph{contrapositive}, which can be
formulated as follows.

\begin{thm} % 5
\label{irreducibility-contra}
Let $f(x,y)\in \Zed[x,y] \setminus \Zed[y]$. If there exists $t_0$ such that for every
integer 
%% $t^* \ge t_0$, $f(x,t^*)$ is reducible over $\Zed[x]$, then $f(x,y)$ is reducible over~$\Zed[x,y]$.
%%KWR906: t^* is only used again in the "goal" theorem statement, so I shifted to t_1.
%% That theorem incidentally used "reducible in", so "over" here was definitely inconsistent.  I've used "in".
$t_1 \ge t_0$, $f(x,t_1)$ is reducible in $\Zed[x]$, then $f(x,y)$ is reducible in $\Zed[x,y]$.
\end{thm}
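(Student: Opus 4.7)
The plan is to derive a factorization of $f(x,y)$ in $\Zed[x,y]$ from the reducibility hypothesis by combining Puiseux expansions of the roots of $f(x,y)$ at $y = \infty$, a pigeonhole argument on factorization patterns, and Hilbert's cube lemma.

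First I set up the analytic machinery. Since $f \notin \Zed[y]$, we have $n := \deg_x f \geq 1$; writing $f(x,y) = T(y)x^n + T_1(y)x^{n-1} + \cdots + T_n(y)$, the equation $f(x,y)=0$ defines $n$ algebraic functions $\alpha_1(y),\ldots,\alpha_n(y)$, each expanded as a convergent Puiseux series at $y = \infty$:
\[
\alpha_i(y) = \sum_{j} c_{ij}\, y^{\rho_{ij}}, \qquad \rho_{ij} \in \tfrac{1}{e}\Zed,
\]
for a common ramification index $e$ and for $|y|$ exceeding some threshold. For every sufficiently large integer $y_1$, these series evaluated at $y_1$ yield exactly the $n$ complex roots of $f(x,y_1)$.

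Next I apply pigeonhole. For each $y_1 \geq y_0$ the hypothesized factorization $f(x,y_1) = g_{y_1}(x)\,h_{y_1}(x)$ in $\Zed[x]$ partitions the root multiset; of the $2^n$ possible index-subsets, some proper nonempty $S \subsetneq \{1,\ldots,n\}$ occurs for an infinite set $\mathcal{T}$ of such $y_1$. Define
\[
\sigma_k(y) := e_k\bigl(\alpha_i(y) : i \in S\bigr), \qquad 1 \leq k \leq |S|,
\]
each itself a Puiseux series in $y^{1/e}$; these are, up to the leading factor $T(y)$, the coefficients of $g_{y_1}(x)$, hence integers for every $y_1 \in \mathcal{T}$. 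Split $\sigma_k = P_k + R_k$, where $P_k \in \Que[y]$ collects the terms with exponent in $\Zed_{\geq 0}$ and $R_k(y) \to 0$ as $y \to \infty$. Once I show $R_k \equiv 0$ for every $k$, the product $\prod_{i \in S}(x - \alpha_i(y))$ will lie in $\Que[y][x]$, divide $f$, and, via Gauss's lemma, yield a nontrivial factorization in $\Zed[x,y]$.

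The hard part is forcing $R_k \equiv 0$, and this is where Hilbert's cube lemma enters. Suppose $R_k \not\equiv 0$. Because $\sigma_k(y_1) = P_k(y_1) + R_k(y_1)$ is an integer for each $y_1 \in \mathcal{T}$, the cube lemma furnishes inside $\mathcal{T}$ an affine Hilbert cube $\{a_0 + \epsilon_1 a_1 + \cdots + \epsilon_d a_d : \epsilon_i \in \{0,1\}\}$ of any prescribed dimension $d > \deg P_k$, concentrated at arbitrarily large $y$. Iterated forward differences along such a cube annihilate $P_k$ entirely but produce from $R_k$ a nonzero integer of modulus less than $1$, the desired contradiction. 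I expect the principal obstacle to be the quantitative balancing in this step: matching the decay rate of $R_k(y)$ against the combinatorial density of $\mathcal{T}$ that the cube lemma can exploit, while also handling the ramification index $e$ and the monodromy permutation of the $\alpha_i$ around $y = \infty$ when passing between ``integer coefficient of $g_{y_1}$'' and ``integer value of $\sigma_k$.''
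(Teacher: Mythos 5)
Your overall route---Puiseux expansions at infinity, pigeonhole on the factorization pattern to fix a subset $S$ of roots, the cube lemma plus iterated differences to kill the tail, then Gauss's lemma---is exactly Hilbert's, but two steps as written would fail. First, the decomposition $\sigma_k = P_k + R_k$ is not exhaustive: a Puiseux series at infinity can contain terms $c\,y^{j/e}$ with $j/e$ positive but non-integral, and these belong neither to $\Que[y]$ nor to a remainder tending to $0$. Your forward differences along a cube of integer values of $y$ annihilate a polynomial in $y$ but do nothing useful to $y^{3/2}$, so the ``integer of modulus less than one'' contradiction does not go through. The paper's fix is to restrict the substituted values to $t_1 = \sigma^e t_0$, so that $t_1^{1/e} = \sigma\, t_0^{1/e}$ and every coefficient series becomes an honest power series in the \emph{integer} variable $\sigma$; the coloring, the cube, and the differencing all live in $\sigma$-space. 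Even after that, one must still prove separately that the positive fractional powers of $t$ carry zero coefficients; the paper does this last, by running the argument from $2^n-1$ distinct prime base points, invoking uniqueness of Puiseux coefficients, and using that $p^{r}/(p')^{r}$ is irrational for distinct primes and non-integral rational $r$. (Relatedly, $P_k \in \Que[y]$ is asserted but needs proof; the paper gets rationality by solving $m$ linear equations with integer right-hand sides via Cramer's rule.)

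Second, ``up to the leading factor $T(y)$, hence integers'' hides a real problem: if $f(x,y_1) = g_{y_1}h_{y_1}$ in $\Zed[x]$, the leading coefficient of $g_{y_1}$ is some divisor of $T(y_1)$ that varies uncontrollably with $y_1$, so no single normalization makes $e_k(\alpha_i : i \in S)$ integral for all $y_1 \in \mathcal{T}$, and ``integer-valued series'' is the engine of the whole difference argument. Hilbert's (and the paper's) remedy is to reduce first to the monic case: pass to $g(y,t) = T^{\,n-1} f(y/T,\,t)$, which is monic and integral, note that Gauss's polynomial lemma makes each specialization factor in $\Zed[y]$ with \emph{monic} factors, run the entire analytic argument there, and translate the resulting factorization back using part (d) of the paper's Lemma on factors. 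You flagged the ramification index and the leading coefficient as ``obstacles,'' which shows the right instincts, but as proposed these are gaps rather than details: without the monic reduction and the $\sigma$-substitution the central estimate is not available.
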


%BILL'S COMMENT: THE REST OF THIS SECTION I REWROTE. IT HAS MORE OF A SKETCH OF WHATS TO COME
%THEN YOU HAD PREVIOUSLY.

Hilbert proved Theorem~\ref{irreducibility-contra} by formulating what is today called
\emph{Hilbert's cube lemma}. It can be viewed not only as an enhanced
form of Dirichlet's pigeonhole principle but also as the first
statement of a Ramsey-type theorem. 

In Section~\ref{sec:ramsey} we discuss Ramsey theory to illustrate why
Hilbert's cube lemma is regarded as 
% being in 
belonging to that field.
In Section~\ref{sec:cube-lemma} we state and give a simple modern proof of the Hilbert's cube lemma and describe optimizations
(we discuss Hilbert's original proof in Section~\ref{sec:conc}).
It is easy to appraise the Hilbert cube lemma as a gem in an isolated setting and
forget the quest that led to it, which was was \emph{to find a
polynomial factor, $\varphi(x,t) \in \Zed[x,t]$, of the polynomial
$f(x,t)$.}

In Sections~\ref{sec:monic} through~\ref{sec:int} 
we provide a motivated account of Hilbert's beautiful proof of (ir)reducibility by
putting ourselves in his shoes and following the trail of ideas that
we find in his 1892 paper. We have tried to make it as self-contained and elementary as possible.

After Hilbert, many mathematicians offered other proofs of the
irreducibility theorem. Many of these
proofs use so-called ``density" arguments, a standard technique in
today's Diophantine approximation theory, but a far cry from the
natural idea of Hilbert to find a factor of a reducible polynomial. We
will say more about modern proofs in Section~\ref{sec:later}.

Hilbert remains one of the greatest mathematicians of all time. His
original proof still contains insights and arguments that are well
worth study even today. We offer the reader a detailed exposition of
this proof in hope of saving it from the oblivion of history.

%%%%%%%%%%%%%%%%%%%%%%%%%%%%%%%%%%%%%%%%%%%

%BILL COMMENT: THIS SECTION IS NEW
%%KWR: I've given it a full tightening pass.  This includes:
% () referring to references [10,16,21] just once.
% () giving the firstname on first mention of a principal figure
% () not "proclaiming" definitions that are used only here.

\section{Ramsey Theory.}
\label{sec:ramsey}

%%KWR531: Note---even in the first line below, the compositor would seem to want us to lowercase the "t".

Theorems in Ramsey theory 
% are almost always of the form (informally):
almost always follow this informally-stated pattern:

\begin{center}
\fbox{\parbox{6.5cm}{\it For any coloring of a large enough object
% (e.g., an initial segment of the natural numbers)  
there is a nice monochromatic sub-object.}
% (e.g., a monochromatic arithmetic progression).}
}
\end{center}

We give three examples of such theorems along with some of the history. 
% For more of the history of these theorems see Alexander Soifer's book~\cite{coloring}.
See \cite{GRS,RamseyInts,ramseypromel} for more on these theorems and also Alexander Soifer's book~\cite{coloring} for more of the history.

In 1916, Issai Schur~\cite{Schur} 
% (see also~\cite{GRS,RamseyInts,ramseypromel}) 
proved the following statement.  
A \emph{$c$-coloring} of a set $S$ is formally a map from $S$ to $\{1,2,\dots,c\}$.

%%KWR906: Added last sentence rather than adopy referee's "...with $c$ colors" because we use it again before graphs are introduced.

\begin{lemma} 
\label{schur}
For all $c$ there exists $S=S(c)$ such that for all $c$-colorings of $\{1,\ldots,S\}$ 
there exists a  monochromatic triple $x,y,z$ such that $x+y=z$.
\end{lemma}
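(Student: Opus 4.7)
The plan is to reduce Schur's statement to the multicolor finite Ramsey theorem for triangles: for every $c$ there exists an integer $R(c)$ such that every $c$-coloring of the edges of the complete graph on $R(c)$ vertices admits a monochromatic triangle. I would take this as a black box, citing the standard inductive proof.

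Given $c$, I would set $S(c) := R(c)$ and consider an arbitrary $c$-coloring $\chi \colon \{1, \ldots, S(c)\} \to \{1, \ldots, c\}$. The key construction is to define an auxiliary edge-coloring $\widehat{\chi}$ of the complete graph on the vertex set $V = \{0, 1, \ldots, S(c)\}$ by setting $\widehat{\chi}(\{i,j\}) := \chi(|j - i|)$ for every pair $i \ne j$ in $V$. Since $|V| = S(c) + 1 \geq R(c)$, Ramsey's theorem produces three vertices $i < j < k$ in $V$ whose three pairwise edges all receive a common color $\kappa$.

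Setting $x := j - i$, $y := k - j$, and $z := k - i$, all three values lie in $\{1, \ldots, S(c)\}$, they satisfy $\chi(x) = \chi(y) = \chi(z) = \kappa$ by construction, and the identity $x + y = (j - i) + (k - j) = k - i = z$ furnishes the desired monochromatic Schur triple.

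The only genuine obstacle is the invocation of the multicolor Ramsey theorem for triangles, which itself requires proof; one establishes $R(c)$ by induction on $c$ using the standard neighborhood-and-pigeonhole argument (fix a vertex in a sufficiently large graph, find among its edges a monochromatic star large enough that the inductive hypothesis applies on its endpoints in $c-1$ colors). The combinatorial heart of the proof of Schur's lemma itself, by contrast, is simply the translation of additive structure into graph-theoretic data through the difference operation $\{i,j\} \mapsto |j-i|$, a move that crisply exhibits why the lemma belongs to Ramsey theory.
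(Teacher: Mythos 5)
Your proof is correct. Note, however, that the paper states Schur's lemma without proving it (it is quoted from Schur's 1916 paper and immediately put to use in Theorem~\ref{schurfermat}), so there is no in-paper proof to match against. Your reduction---coloring the edge $\{i,j\}$ of a complete graph on $\{0,1,\dots,S(c)\}$ by the color of the difference $|j-i|$ and extracting a monochromatic triangle---is the standard modern argument, and every step checks out: the three differences $j-i$, $k-j$, $k-i$ of a triangle $i<j<k$ do lie in $\{1,\dots,S(c)\}$ and do satisfy the Schur relation. The multicolor Ramsey theorem for triangles that you invoke as a black box is essentially Theorem~\ref{ramsey} of the paper with $m=3$, so within this paper you could simply cite that. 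Two small remarks. First, your argument is ahistorical in a way the paper cares about: Schur (1916) predates Ramsey (1930), and Schur's own proof is a direct induction on $c$ (remove the most popular color class, look at differences within it, and recurse), giving $S(c)\leq\lfloor c!\,e\rfloor$; your route gives the comparable bound $S(c)\leq R_c(3)$ but logically depends on the later theorem. Second, the lemma as stated does not require $x$, $y$, $z$ to be distinct, and indeed your construction may produce $x=y$; that is consistent with the statement, but worth being aware of.
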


%%KWR531: Above, the compositor wanted us not to indent the lines between the theorem statements---IMHO because they collectively with those statements form one logical paragraph.  Whereas here it strikes me that this one starts a new paragraph and the Ramsey one forms a paragraph unto itself.  But maybe worth your seeing if you agree.

Schur viewed his lemma as a means to an end and so
did not launch what is now called Ramsey theory. 
He used it to prove the following theorem in number theory.

%%KWR531: It seems we use colons before theorem statements more often than not---and the compositor let them stands.  I've left a period in only when it ends a clause that does not include "following" or similar.  

%\begin{definition}
%If $p$ is a prime then let $\Zed_p^*$ be the numbers
%$\{1,\ldots,p-1\}$ together with the operation of 
%modular multiplication. This is known to be a group.
%\end{definition}
%%KWR: Moved to body of proof.  This proclaimed definition interposed itself before the "theorem" anyway.

\begin{thm} 
\label{schurfermat}
Let $n\ge 1$. 
There exists $q$ such that, for all primes $p\ge q$,
there exists $x,y,z\in \{1,\ldots,p-1\}$ such that 
$x^n+y^n\equiv z^n \pmod p$.
\end{thm}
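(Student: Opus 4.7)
The plan is to reduce Theorem~\ref{schurfermat} to Schur's Lemma (Lemma~\ref{schur}) by introducing an appropriate $n$-coloring of $\{1,\ldots,p-1\}$. Set $q := S(n)+1$ and let $p \geq q$ be prime. I would color each $a \in \{1,\ldots,p-1\}$ according to the coset of $a \bmod p$ in the quotient group $(\Zed/p\Zed)^*/H$, where $H$ is the subgroup of nonzero $n$-th power residues. Since $(\Zed/p\Zed)^*$ is cyclic of order $p-1$, this quotient has order $\gcd(n,p-1) \leq n$, so the coloring uses at most $n$ colors (pad with unused colors to reach exactly $n$ if necessary).

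Because $p - 1 \geq S(n)$, applying Schur's Lemma to the restriction of this coloring to $\{1,\ldots,S(n)\}$ produces a monochromatic triple $a,b,c$ with $a+b=c$ as integers, all lying in $\{1,\ldots,p-1\}$. Being monochromatic means that $a,b,c$ lie in a common coset $gH \pmod p$, so there exist $x,y,z \in \{1,\ldots,p-1\}$ with $a \equiv g x^n$, $b \equiv g y^n$, and $c \equiv g z^n \pmod p$. The identity $a+b=c$, reduced modulo $p$, then yields $g(x^n+y^n) \equiv g z^n \pmod p$, and dividing by the unit $g$ gives the desired congruence $x^n + y^n \equiv z^n \pmod p$.

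There is essentially no obstacle beyond identifying the right coloring: once one sees that the cosets of the $n$-th powers form an $n$-coloring of the nonzero residues, Schur's Lemma supplies the monochromatic additive triple and the algebraic bookkeeping is routine. Two small points deserve attention. First, one must verify that the identity $a+b=c$ holds on the nose as integers (not merely modulo $p$), which is guaranteed by Schur's Lemma since the entire triple lies in $\{1,\ldots,S(n)\} \subseteq \{1,\ldots,p-1\}$; this is what lets us reduce cleanly modulo $p$ without worrying about wrap-around. Second, when $\gcd(n,p-1) < n$ the coloring genuinely uses fewer than $n$ colors, which only strengthens the hypothesis of Schur's Lemma, so no difficulty arises.
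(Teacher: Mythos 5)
Your proof is correct and follows essentially the same route as the paper: color the nonzero residues by their coset modulo the subgroup $H$ of $n$-th power residues, note there are $\gcd(n,p-1)\le n$ cosets, and apply Schur's Lemma to extract a monochromatic additive triple. Your choice $q=S(n)+1$ is in fact slightly more careful than the paper's $q=S(n)$, since it guarantees $p-1\ge S(n)$ as the application of Schur's Lemma requires.
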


\begin{proof}
Given $n$ let $q=S(n)$.
Let $p$ be a prime such that $p\ge S(n)$.
Then $\Zed_p^*$, which denotes the numbers
$\{1,\ldots,p-1\}$ together with the operation of 
 multiplication modulo $p$, forms a group.
All arithmetic henceforth is in $\Zed_p^*$.

Let $H = \{ x^n  \mid x\in \Zed_p^* \}$. Clearly $H$ is a subgroup of $\Zed_p^*$.
It is known that $|H|=\frac{p-1}{\gcd(n,p-1)}$ so the number of cosets is 
$c=\frac{p-1}{|H|} = \gcd(n,p-1) \le n$.
We denote the cosets by
$d_1H,\ldots,d_cH$. 

Consider the following $c$-coloring of $\{1,\ldots,p-1\}$:
color $x$ by $i$ such that $x\in d_iH$.
Since $c\le n$ and $p-1\ge S(n)$, by Schur's lemma,
there exists a monochromatic $x_1,y_1,z_1$ such that $x_1+y_1=z_1$.
Since they are all in the same coset, there exists $d$ such that
$x_1,y_1,z_1 \in dH$. Hence 
$x_1=dx^n$, 
$y_1=dy^n$, 
$z_1=dz^n$.
Since $dx^n + dy^n = dz^n$ we get $x^n+y^n=z^n$.
\end{proof}

% Theorem~\ref{schurfermat} is of interest since it refuted the following plan for proving
% Fermat's last theorem: Show that for all $n\ge 3$ there exists $p$ such that $x^n+y^n\equiv z^n$
% has no solution with $x,y,z\in \{1,\ldots,p-1\}$.
%%KWR: Surely not what you mean to say, since I could just pick p=3?

Theorem~\ref{schurfermat} refuted the idea of proving
Fermat's last theorem by showing that for all $n\ge 3$ there are arbitrarily large $p$ such that $x^n+y^n\equiv z^n$
has no solution modulo $p$.

In 1927, Bartel van der Waerden~\cite{VDW} 
%(see also~\cite{GRS,RamseyInts,ramseypromel}) 
proved the following theorem which now bears his name.

\begin{thm} 
\label{vdw}
For all $k,c$ there is a number $W=W(k,c)$
such that, for all $c$-colorings of $\{1,\ldots,W\}$,
there exists a monochromatic arithmetic sequence of length $k$.
\end{thm}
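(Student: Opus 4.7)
\bigskip

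\noindent\textbf{Proof proposal.} The plan is to prove Theorem~\ref{vdw} by induction on the length $k$, with a subsidiary induction (over the number of colors to ``focus'') that lets us combine many monochromatic arithmetic progressions of length~$k$ into one of length~$k+1$. This is the now-classical \emph{color focusing} argument. The base case $k=1$ is trivial, and $k=2$ is pigeonhole: any $c$-coloring of $\{1,\dots,c+1\}$ must repeat a color, yielding two equal-colored integers, which form an AP of length~$2$.

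For the inductive step, assume $W(k,c)$ exists for every $c$; I want to produce $W(k+1,c)$. Call two APs of length~$k$, say $a, a+d, \dots, a+(k-1)d$ and $a', a'+d', \dots, a'+(k-1)d'$, \emph{focused at} $f$ if $a+kd = a'+kd' = f$. I will prove by induction on $n$ the following auxiliary statement: for every $c$ and every $n \le c$, there exists $N(n,c)$ such that every $c$-coloring of $\{1,\dots,N(n,c)\}$ either contains a monochromatic AP of length $k+1$ or contains $n$ pairwise differently colored monochromatic APs of length $k$ that are all focused at a common point $f \le N(n,c)$. Taking $n = c$ gives what we want, because the focus point $f$ must itself have one of the $c$ colors, and that color agrees with one of the $c$ focused APs, extending it to a monochromatic AP of length $k+1$. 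So we may set $W(k+1,c) := N(c,c)$.

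The base $n = 1$ uses the outer induction hypothesis: in a window of length $W(k,c)$ we find a monochromatic AP of length $k$; doubling the window gives room for its focus point. The inductive step from $n$ to $n+1$ is the heart of the argument. I partition $\{1,\dots,M\}$ for sufficiently large $M$ into consecutive blocks of length $N(n,c)$ and color each block by its induced coloring. The number of possible block-colorings is at most $c^{N(n,c)}$, so by the outer induction hypothesis applied with this larger color count, a long enough string of blocks contains a monochromatic (in the block-coloring sense) AP of length $k+1$ of blocks. The first $k$ such blocks each contain, by hypothesis, $n$ focused differently colored APs of length $k$; because the blocks have identical induced colorings, I can ``lift'' these $n$ APs together with the AP of blocks to produce $n+1$ focused differently colored monochromatic APs of length $k$ in $\{1,\dots,M\}$, provided no monochromatic AP of length $k+1$ appeared along the way. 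Choosing $M = N(n+1,c)$ to be the resulting size closes the induction.

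The main obstacle is bookkeeping: verifying that the lifted progressions really are pairwise differently colored and share a common focus, and checking that all indices stay within $\{1,\dots,M\}$. The argument yields only astronomical (Ackermann-type) bounds on $W(k,c)$, but since the theorem asserts mere existence, this is harmless.
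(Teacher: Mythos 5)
The paper does not actually prove Theorem~\ref{vdw}: it is quoted only as a historical landmark, with a citation to van der Waerden's 1927 paper, so there is no in-paper argument to compare yours against. Judged on its own, your sketch is the classical double-induction color-focusing proof (the one expounded in Graham--Rothschild--Spencer), and its architecture is sound: the auxiliary statement about $n$ pairwise differently colored length-$k$ progressions sharing a common focus, the block-coloring with at most $c^{N(n,c)}$ colors, the lift of the focused progressions along an arithmetic progression of identically colored blocks, and the final step $n=c$, where the color of the focus must agree with one of the $c$ progressions, are all exactly right.

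One point needs fixing before the sketch becomes a proof. In the inner inductive step you say that the outer induction hypothesis, applied to the block-coloring, yields a monochromatic arithmetic progression of blocks of length $k+1$. It cannot: the outer hypothesis supplies only $W(k,\cdot)$, and invoking $W(k+1,c^{N(n,c)})$ at that point would be circular. What you actually need---and all that your own construction uses---is a monochromatic AP of blocks of length $k$. The $(n{+}1)$-st focused progression is then the one running through the copies of the old focus $f$ in those $k$ blocks (its color differs from the other $n$, since otherwise a monochromatic AP of length $k+1$ already exists), and the new common focus lands one block-step beyond the last block of the block-AP; hence you must pad the interval, e.g.\ take $M$ to be twice the product of the block length and $W\bigl(k,c^{\text{block length}}\bigr)$, so that this point still lies in $\{1,\dots,M\}$. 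With that correction and the routine check that the lifted progressions inherit their colors from the base block (so they remain pairwise differently colored), the argument closes; the Ackermann-type bounds are indeed harmless for a pure existence statement.
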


% Van der Waerden proved this theorem because Baudet had conjectured that it was true.
The title of \cite{VDW} credits Pierre Baudet with having conjectured this, but
Soifer~\cite{coloring} gives evidence that Schur had also done so.
Even though van der Waerden did not have another goal in mind, he did not pursue
this line of research and 
% hence he 
so did not launch what is now called Ramsey theory.

Frank Ramsey~\cite{ramsey30} 
% (see also~\cite{GRS,RamseyInts,ramseypromel}) 
proved the following theorem that now bears his name.  
% (we only state the case for graphs, not hypergraphs):
As others often do, we state only the case for graphs, not hypergraphs.
%
%\begin{definition}~
%\begin{enumerate}
%\item
%A {\it graph} is an ordered pair $(V,E)$ where $E$ is a subset of unordered pairs of elements from $V$.
%The set $V$ is called {\it the vertices} and the set $E$ is called {\it the edges}.
%To picture a graph imagine a set of points in the plane (the vertices) and connect any
%two $x,y$ if $\{x,y\}\in E$.
%\item
%Let $m\in \Nat$. {\it The complete graph on $m$ vertices}, denoted $K_m$,
%has $V=\{1,\ldots,m\}$ and $E$ is the set of all pairs of elements from $V$.
%\item
%Let $R\in\Nat$. If the edges of $K_R$ are colored then a {\it monochromatic $K_m$}
%is a set of $m$ vertices such that all of the edges between them are the same color.
%\end{enumerate}
%\end{definition}
%
%%KWR906: I changed the following per referee comment.  I shied away from Bill's suggestion 
%% ---2) Page 3. Referee is right that E is defined as a set of ordered pairs.  Fix: a set E \subseteq \binom{V}{2} (the set of all unordered pairs of elements of $V$)---
%% because I feared defining \binom{V}{2} as a set would be a confusing novelty.
%% BILL911 COMMENT : $\binom{V}{2}$ is becoming standard.
A \emph{graph} consists of a set $V$ of \emph{vertices} and a set 
$E \subseteq \binom{V}{2}$ (the set of unordered pairs of elements of $V$).
The graph is \emph{complete} if $E$ includes all such pairs and is then denoted by $K_n$, where $n = |V|$.  
%%A \emph{$c$-coloring} of the edges is a mapping $f$ from $E$ to $\{1,\dots,c\}$.  
%%KWR906: Changed since defined above now.
We consider $c$-colorings of the \emph{edges}, namely mappings $f$ from $E$ to $\{1,\dots,c\}$.
A \emph{monochromatic $K_m$} means a subset $V' \subseteq V$ of size $m$ and $c' \leq c$ such that for all distinct $u,v \in V'$, $(u,v)$ is an edge and $f(u,v) = c'$.

%%KWR906: I used Bill's (second) suggestion.
\begin{thm} 
\label{ramsey}
For all $c,m$ there exists a number $R$
such that for all $c$-colorings of the edges of $K_R$
there exists a monochromatic $K_m$.
\end{thm}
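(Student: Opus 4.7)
The plan is to prove this by a double induction, first handling the two-color case and then reducing the general $c$-color case to it. Let me define $R(s,t)$ to be the least $N$ such that any 2-coloring of the edges of $K_N$ contains either a monochromatic $K_s$ in color $1$ or a monochromatic $K_t$ in color $2$. The goal is to show $R(s,t)$ is finite for all $s,t \ge 1$, and then to show that $R$ exists for every $c \ge 2$.

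First I would handle the base cases $R(s,1) = R(1,t) = 1$, which hold trivially since any single vertex is a monochromatic $K_1$ in either color. Then I would prove the recursive bound
\[
R(s,t) \le R(s-1,t) + R(s,t-1)
\]
by the following neighborhood argument. Fix any 2-coloring of $K_N$ where $N = R(s-1,t) + R(s,t-1)$, and pick a vertex $v$. Partition the remaining $N-1$ vertices into the set $A$ of vertices joined to $v$ in color $1$ and the set $B$ of vertices joined to $v$ in color $2$. By pigeonhole, either $|A| \ge R(s-1,t)$ or $|B| \ge R(s,t-1)$. In the first case, $A$ either contains a monochromatic $K_{s-1}$ in color $1$, which together with $v$ forms a monochromatic $K_s$ in color $1$, or else contains a monochromatic $K_t$ in color $2$ directly; the second case is symmetric. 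Induction on $s+t$ then completes the two-color case.

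Next, for general $c$ I would induct on $c$. Assume the $(c-1)$-color case has been established with some finite bound $R_{c-1}(m)$. Given a $c$-coloring of $K_R$, I treat the colors $c-1$ and $c$ as a single merged color, producing a $(c-1)$-coloring. Choosing $R := R_{c-1}\bigl(R_2(m,m)\bigr)$, the inductive hypothesis supplies a clique of size $R_2(m,m)$ that is monochromatic in the reduced palette. If that color is one of $1,\dots,c-2$, we are done; otherwise every edge of this clique was originally colored $c-1$ or $c$, and the two-color case provides a monochromatic $K_m$ inside it.

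The main point of care, rather than any serious obstacle, is bookkeeping: one must be sure that the merged-color trick in the inductive step genuinely reduces to a strictly smaller number of colors and that the recursion on $s+t$ in the two-color step terminates at the base cases. A pleasant byproduct is that iterating the recurrence yields the explicit estimate $R(s,t) \le \binom{s+t-2}{s-1}$, so that $R_2(m,m) \le 4^m$ and hence $R$ is at most a tower of such exponentials in $c$; this makes the finiteness fully transparent, even though the bound is notoriously far from tight.
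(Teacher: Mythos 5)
The paper does not actually give a proof of Theorem~\ref{ramsey}: it only states it, attributes it to Ramsey, and remarks that Erd\H{o}s and Szekeres rediscovered it, so there is no ``paper's proof'' to compare against. Your argument is the standard Erd\H{o}s--Szekeres proof and it is correct: the base cases $R(s,1)=R(1,t)=1$, the pigeonhole split of the punctured neighborhood of a vertex giving $R(s,t)\le R(s-1,t)+R(s,t-1)$, and the color-merging reduction $R_c(m)\le R_{c-1}\bigl(R_2(m,m)\bigr)$ are all sound. One small point worth making explicit when writing this up: the induction on $c$ must carry the theorem for \emph{all} target sizes $m$ simultaneously, since the step invokes the $(c-1)$-color case at the larger size $R_2(m,m)$; you clearly intend this, but the quantifier ordering (``for all $c$, for all $m$, there exists $R$'' with the induction on $c$ holding the universal over $m$) should be stated so the recursion is well-founded. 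The binomial estimate $R(s,t)\le\binom{s+t-2}{s-1}$ you mention also matches the bound $R_2(m)\le\binom{2m-2}{m-1}$ that the paper quotes later in Section~\ref{sec:conc}.
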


The least such $R$ satisfying the conditions of Theorem \ref{ramsey} is denoted by $R_c(m)$.  The folkloric example of this theorem is 
% the problem of proving 
that in any group of six people, \emph{at least three know each other or at least three are complete strangers}.  If the six are the vertices of a $K_6$ and each edge is 
% given one of two colors 
colored green or blue (friends or strangers), then the theorem says there is at least one monochromatic triangle. In fact, there are at least \emph{two} such triangles, whereas $K_5$ has none when a green five-pointed star is inscribed in a blue pentagon, so that $R_2(3) = 6$.

%%KWR: Added pentagon reference so as to exemplify the R(c,m) notation.

% Ramsey viewed his lemma as a means to an end and 
%%KWR531: Added mention of logic per WG.
% hence he   %%KWR531: Another reason I changed "hence" to "so" is that Ramsey died young.
Ramsey applied his lemma to problems in mathematical logic.  
He viewed it as a means to an end and
so did not launch what is now called Ramsey theory. 

In 1892, 
% (see also~\cite{GRS,RamseyInts,ramseypromel}) ,
before all of the results above,  Hilbert~\cite{Hilbert1892} proved 
% the Hilbert cube lemma
the lemma featured in the next section.
% which we state and prove in the next section. When you read it you will see that it is very much like the three statements in this section as it involves
Like the three statements above (Lemma 6, Theorems 8 and 9), it applies to
any $c$-coloring and yields a monochromatic nice substructure.
Hilbert viewed his lemma as a means to an end and 
% hence he 
so did not launch what is now called Ramsey theory. He used it to prove the
Hilbert irreducibility theorem, which is
%, as you know, the topic of this paper.
our main topic.

Who did launch Ramsey theory? 
% \Erdos claims that Szekeres rediscovered the statement
% and proof of Ramsey's Theorem while proving the following~\cite{ES}
%%KWR: Is Erdo"s still publishing?  I know Trump would say he's doing even better than Frederick Douglass. 
%%KWR: It's also not completely clear to me whether [ES] is the reference for the theorem or for just Erdos's claim?  OK, I found it http://www.numdam.org/article/CM_1935__2__463_0.pdf and checked---there is no claim in the paper so I trust my words get it right:
%
Speaking about his joint paper in 1935 with George Szekeres \cite{ES}, Paul \Erdos said that it was Szekeres 
who rediscovered the statement and proof of Ramsey's theorem.  They used it as a means to the following end.

\begin{thm}
For all $n\ge 3$ there exists $m > n$ such that for any $m$ points in the plane
in general position there exists $n$ points that form a convex hull.
\end{thm}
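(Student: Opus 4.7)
The plan is to deduce this from Ramsey's theorem applied to 4-element subsets of the given points. Although Theorem~\ref{ramsey} is stated only for graphs, Ramsey's original paper~\cite{ramsey30} proves the $k$-uniform hypergraph version by the same kind of induction; I would invoke the case $k = 4$, namely that for all $\ell$ and $c$ there exists a number $R^{(4)}(\ell, c)$ such that any $c$-coloring of the 4-element subsets of a set of size $R^{(4)}(\ell, c)$ contains a monochromatic $\ell$-subset.

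Two geometric facts drive the argument. The first is a ``local-to-global'' lemma: a set $S$ of at least four points in general position in the plane is in convex position if and only if every 4-subset of $S$ is in convex position. The forward direction is immediate; for the converse, if some $p \in S$ lies in the interior of the convex hull of $S \setminus \{p\}$, then triangulating that hull places $p$ inside some triangle with vertices in $S$, yielding a 4-subset not in convex position. The second fact, essentially due to Esther Klein, is that among any five points in general position in the plane, some four are in convex position. This is established by case analysis on the number of vertices of their convex hull: if the hull has four or five vertices the claim is immediate, and in the triangular case one shows that the two interior points $p,q$ together with the two hull vertices $B,C$ lying on the same side of the line through $p$ and $q$ form a convex quadrilateral; neither $p$ nor $q$ can lie inside the opposing triangle because such a triangle meets the line $pq$ in only one vertex, and neither $B$ nor $C$ can lie inside because each is already extremal in the hull of all five points.

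Given $n \ge 3$, I would then set $\ell := \max(n, 5)$ and $m := R^{(4)}(\ell, 2)$; trivially $m > \ell \ge n$. For any $m$ points in general position, 2-color their 4-element subsets by ``convex'' versus ``non-convex.'' The hypergraph Ramsey theorem produces an $\ell$-subset $T$ all of whose 4-subsets receive the same color; since $|T| \ge 5$, Klein's observation forces that common color to be ``convex''; and then the local-to-global lemma shows that $T$ itself is in convex position. Since $|T| \ge n$, any $n$ of its points complete the proof.

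The principal obstacle is the invocation of the 4-uniform hypergraph form of Ramsey's theorem, which slightly exceeds the graph statement given in Theorem~\ref{ramsey}. This is an expositional gap rather than a mathematical one: the hypergraph version is proved by a standard induction on uniformity with the graph case serving as the base. A conceptually different route avoiding Ramsey altogether is the well-known ``cup--cap'' argument, which even yields the sharper bound $m = \binom{2n-4}{n-2}+1$, but it would bypass the narrative thread of the present paper.
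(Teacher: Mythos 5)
The paper states this theorem only as a historical aside---it is the Erd\H{o}s--Szekeres ``happy ending'' theorem, cited as the application for which Szekeres rediscovered Ramsey's theorem---and offers no proof of its own, so there is nothing in the text to compare against line by line. Your argument is the standard and correct Ramsey-theoretic proof: the reduction to the 4-uniform hypergraph Ramsey theorem, Esther Klein's observation that any five points in general position contain four in convex position (which forces the monochromatic color to be ``convex''), and the Carath\'eodory-style local-to-global step showing that a set all of whose 4-subsets are in convex position is itself in convex position. All three ingredients are sound, and the final step---that any $n$ points of a set in convex position are again in convex position---holds because each point of such a set is extreme. You are also right to flag that Theorem~\ref{ramsey} as stated in the paper covers only graphs, whereas you need the 4-uniform case; the paper explicitly says it suppresses the hypergraph version, so invoking it is consistent with the source. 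Your closing remarks on the cup--cap alternative and its bound $\binom{2n-4}{n-2}+1$ are accurate, and indeed that second route is the one Erd\H{o}s and Szekeres themselves emphasized for quantitative purposes.
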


%%KWR: Added---I feel we need some kind of section closing like it.
\noindent
But they also 
% centered 
%%KWR531: Agreed with WG---a clique has no "center" anyway.  I really want to say "anchored" but that's also mixing metaphors.  Could say "generated" but "attracted" is simpler.
attracted a clique of mostly Hungarians who developed the ideas, conjectures, and results that grew into Ramsey theory as we know it.

%%KWR: I have not proofread beyond here yet.

%%%%%%%%%%%%%%%%%%%%%%%%%%%%%%%%%%
\section{The Cube Lemma.}
\label{sec:cube-lemma}

Hilbert's first paragraph in \cite{Hilbert1892} crisply framed the \emph{problem} of
irreducibility under substitutions represented by the statement of
Theorem~\ref{irreducibility}. Then he continued right away: ``Our
developments rest on the following lemma.'' We reproduce his words but
change his $a,\mu$ to $c,\beta$ and compress his displayed formulas
using variables $b_1,\dots,b_m$ that take only the values $0$ or $1$:

%\begin{quotation}
%Given an infinite integer sequence $a_1,a_2,a_3,\dots$ in which
%generally each $a_s$ denotes one of the $c$-many positive integers
%$1,2,\dots,c$, let $m$ be any positive whole number. Then there are
%always $m$-many positive whole numbers
%$\mu^{(1)},\mu^{(2)},\dots,\mu^{(m)}$ such that the $2^m$ elements
%\[
%a_{\beta + \sum_{i=1}^m b_i \mu^{(i)}}
%\]
%for infinitely many whole numbers $\beta$ are collectively the same
%number $G$, where $G$ is one of the numbers $1,2,\dots,c$.
%\end{quotation}

%%KWR531: The compositor didn't like indenting this, despite its being standard LaTeX behavior for this environment.  I wonder if it will fly better---and look better---as a boxed statement.  I had boxed statements to go with boxed equation sets which the compositor let stand, and Mark added one for Hilbert's key insight.  To discuss maybe...

\begin{center}
\fbox{\parbox{11cm}{Given an infinite integer sequence $a_1,a_2,a_3,\dots$ in which
generally each $a_s$ denotes one of the $c$-many positive integers
$1,2,\dots,c$, let $m$ be any positive integer. Then there are
always $m$-many positive integers
$\mu^{(1)},\mu^{(2)},\dots,\mu^{(m)}$ such that the $2^m$ elements
\[
a_{\beta + \sum_{i=1}^m b_i \mu^{(i)}}
\]
for infinitely many integers $\beta$ are collectively the same
number $G$, where $G$ is one of the numbers $1,2,\dots,c$.}}
\end{center}

\noindent
Call those elements collectively the \emph{$m$-cube}, which we can
denote by $C(\beta;\mu_1,\dots,\mu_m)$. The sequence
$a_1,a_2,a_3,\dots$ can be called a \emph{coloring} of $\Nat^+$ using
$c$~colors. Thus the conclusion is that every coloring gives rise to
\emph{increments} $\mu^{(1)},\mu^{(2)},\dots,\mu^{(m)}$ that yield a
monochromatic $m$-cube for infinitely many starting points $\beta$.
This is implied by the following finitistic statement, which we regard
as \textbf{Hilbert's cube lemma} in the modern sense.

\begin{lemma} 
\label{le:cube}
For all $m,c$ there is a number $H$ such that, for all $c$-colorings
of $\Nat^+$ and all intervals of length $H$ in $\Nat^+$, there is a
monochromatic $m$-cube within the interval.
\end{lemma}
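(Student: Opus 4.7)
The plan is to proceed by induction on $m$, following the standard Ramsey-style template of iterated pigeonholing on ``enough'' disjoint blocks. For the base case $m=1$, a monochromatic $1$-cube is simply a monochromatic pair $\{a_\beta,a_{\beta+\mu^{(1)}}\}$, and Dirichlet's pigeonhole principle furnishes one in any interval of length $c+1$: among $c+1$ consecutively indexed elements, two share a color, and setting $\beta$ to the smaller index and $\mu^{(1)}$ to their positive difference produces the cube. Hence $H(1,c):=c+1$ works.

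For the inductive step, assume $H:=H(m,c)$ has been constructed. I would slice a sufficiently long interval into $N$ consecutive disjoint subintervals $I_1,\ldots,I_N$, each of length $H$. The induction hypothesis supplies inside each $I_j$ a monochromatic $m$-cube $C(\beta_j;\mu_j^{(1)},\ldots,\mu_j^{(m)})$ of some color $G_j\in\{1,\ldots,c\}$, with every increment $\mu_j^{(i)}<H$. To each subinterval $I_j$ I attach the label $\bigl(G_j,\mu_j^{(1)},\ldots,\mu_j^{(m)}\bigr)$, and there are at most $c\cdot H^m$ possible labels.

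Choosing $N:=c\cdot H^m+1$ forces, by a second pigeonhole, two indices $j<k$ whose cubes carry the same color $G$ and the \emph{same} increment vector $(\mu^{(1)},\ldots,\mu^{(m)})$. Setting $\mu^{(m+1)}:=\beta_k-\beta_j>0$ then yields the monochromatic $(m+1)$-cube $C(\beta_j;\mu^{(1)},\ldots,\mu^{(m)},\mu^{(m+1)})$: its ``$b_{m+1}=0$'' face is the $m$-cube inside $I_j$, its ``$b_{m+1}=1$'' face is the translated $m$-cube inside $I_k$, and all $2^{m+1}$ positions carry color $G$. Declaring $H(m+1,c):=N\cdot H$ closes the induction.

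The one step requiring real care, rather than a genuine obstacle, is the \emph{double} pigeonhole: the subintervals must be grouped by color \emph{and} by increment vector simultaneously, which is only feasible because the increments are bounded by the block length $H$ supplied by the induction hypothesis. This coupling is the characteristic Ramsey-theoretic move and is responsible for the tower-type growth of $H(m,c)$ in $m$. A routine check is that the resulting $(m+1)$-cube lies inside the original interval, which follows because the largest index $\beta_j+\sum_{i=1}^{m}\mu^{(i)}+\mu^{(m+1)}$ is bounded by the right endpoint of $I_k$, and hence by that of the original interval.
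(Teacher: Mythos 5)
Your proof is correct, and it follows the same induction-on-$m$, block-decomposition template as the paper, but the pigeonhole at the heart of the inductive step is applied to different data. The paper partitions the interval into $1+c^{h}$ blocks of length $h=H(m-1,c)$, pigeonholes on the \emph{entire color sequence} of each block to find two identically colored blocks, and only then invokes the induction hypothesis once (the second block inherits the translated cube for free because its coloring is literally the same). You instead invoke the induction hypothesis in \emph{every} block and pigeonhole on the resulting label $(G_j,\mu_j^{(1)},\dots,\mu_j^{(m)})$, of which there are at most $c\cdot H^{m}$; your observation that this is only possible because each increment is bounded by the block length is exactly the right point to flag, and the remaining details (that $\beta_k-\beta_j>0$, that the two faces of the new cube are the two old cubes, and that everything stays inside the interval) all check out. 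Note that the two matched cubes need not sit at the same offset within their blocks, but this is harmless since only the color and the increment vector must agree. The trade-off is quantitative: the paper's recursion $H_m\leq h(1+c^{h})$ is exponential in $h$ and leads to the non-tower-bounded growth the paper remarks on, whereas yours, $H(m+1,c)\leq H(1+cH^{m})$, is polynomial in $H$ at each step and yields bounds of roughly $(c+1)^{m!}$, i.e., doubly exponential in $m$ --- closer in spirit to the improved counting arguments the paper mentions after its proof and to Hilbert's original $(c+1)^{F_{2m}}$ bound. (Your closing attribution of ``tower-type growth'' to this coupling is therefore slightly off for your own recursion, but that aside does not affect the validity of the argument.)
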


%BILL COMMENT: REMOVE these two lines as they are already in the intro:

%The following proof tries to be simplest; we discuss optimizations and
%Hilbert's original proof in Section~\ref{sec:conc} at the end.

%BILL911. I define $H_m$.

\begin{proof}
We fix $c$. We will let $H_m$ be a value of $H$ that satisfies the theorem.
We prove, by induction on $m$, that $H_m$ always exists.
For the base case $m = 1$, we can
take $H_1 = c + 1$. This just says that for any $c$-coloring of an
interval of length $c + 1$ there will be two elements that are the
same color. Taking $\beta$ to be the smaller one and $\beta + \mu_1$
the larger one, $C(\beta;\mu_1)$ is a monochromatic 1-cube.

For the induction step, assume that $h = H_{m-1}$ exists. We show that, for
any $c$-coloring of an interval of length 
$H_m = h \!\cdot\! (1 + c^h)$, there is a monochromatic $m$-cube. Let
$COL$ be a $c$-coloring of an interval of length $H_m$. Partition the
interval into $1 + c^h$ blocks of size $h$. By the pigeonhole
principle, some two of those blocks have the same sequence of $h$
colors. By the induction hypothesis, the former has a monochromatic
$(m-1)$-cube $C(\beta;\mu_1,\dots,\mu_{m-1})$, and since the color
sequence of the latter is the same, it has
$C(\beta';\mu_1,\dots,\mu_{m-1})$ with the same color and increments
but $\beta' > \beta$. Take $\mu_m = \beta' - \beta$. Then
$C(\beta;\mu_1,\dots,\mu_m)$ is the required monochromatic $m$-cube.
This proves the lemma statement with $H = H_m$.
\end{proof}

%%KWR906: Inserted per Bill's suggestion.  Are the notation H(m,c) and term 
%% "Hilbert Cube Number" standard?  My sense was only quasi-so.
%%KWR906 later---Bill, is my added last line in the proof good enough?

By analogy with Ramsey numbers, one can denote the least such $H$ by $H(m,c)$ 
and call it a ``Hilbert Cube Number.''  
The above proof embodies a recursive upper bound
$H(m,c) \leq H(m-1,c)(1 + c^{H(m-1,c)})$, with basis $H(1,c) = 1$ for
all $c$. This is far from best possible. When
$2 \leq m \leq c$, one can improve the upper bound to 
$H(m,c) \leq h(1 + c(m-1)^{h})$, where $h = H(m-1,c)$, by a different
counting argument. One can further tweak this by using $\binom{m-1}{h}$ in
place of $(m-1)^h$. These formulas are not bounded by any fixed tower
of exponents in $c$ and~$m$.

As observed by Brown et~al.~\cite{BCEG}, Hilbert's original proof
yields bounds with $(c + 1)$ rather than $(m - 1)$ in the base and the
Fibonacci number $F_{2m}$ in the exponent, that is,
$H(m,c) \leq (c + 1)^{F_{2m}}$, where $F_0 = 0$, $F_1 = 1$, $F_2 = 1$,
$F_3 = 2$, and so on. These bounds have doubly-exponential growth.
Szemer\'edi \cite{Sz} (see also~\cite{GRS}) improved both the bounds
and the nature of the result.  Here is his more modern form of the lemma:

\begin{lemma}
Let $H,c > 0$ and let $A$ be a subset of
$[1,\dots,H]$ such that $|A| \geq H/c$.  Then for some constant $C$ depending
only on $c$, $A$ contains an
$m$-cube where $m \geq \log\log(H) - C$.
\end{lemma}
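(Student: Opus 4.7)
The plan is to build the cube greedily in $m$ stages, choosing one increment $\mu_k$ at each stage, and to measure progress by the density of a shrinking ``witness set'' $B_k \subseteq [1,H]$ with the property that every $\beta \in B_k$ is the base of a sub-cube $C(\beta;\mu_1,\dots,\mu_k)$ contained in $A$. Concretely, set $B_0 = A$ and $B_k = B_{k-1} \cap (B_{k-1} - \mu_k)$. A short induction shows $\beta \in B_m$ iff $\beta + \sum_{i\in S}\mu_i \in A$ for every $S \subseteq [m]$, so any element of $B_m$ furnishes the required monochromatic $m$-cube; and since $B_k \subseteq A$ for every $k$, the cube automatically sits inside $[1,H]$.

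At each stage I would choose $\mu_k$ by a pigeonhole/averaging argument. The double count
\[
\sum_{\mu=1}^{H-1} \bigl|B_{k-1} \cap (B_{k-1} - \mu)\bigr| \;=\; \binom{|B_{k-1}|}{2}
\]
shows that some positive $\mu_k < H$ achieves $|B_k| \geq |B_{k-1}|^2/(3H)$ provided $|B_{k-1}|$ is not too small. Writing $c_k = H/|B_k|$, this gives the recurrence $c_k \leq 3 c_{k-1}^2$, with $c_0 \leq c$.

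Iterating, $\log c_k \leq 2^k \log(3c)$, so $B_m$ is certainly nonempty as long as $c_m \leq H$, i.e.\ $2^m \log(3c) \leq \log H$, which rearranges to
\[
m \;\geq\; \log\log H - \log\log(3c).
\]
Setting $C = \log\log(3c) + O(1)$ (to absorb the slack from the ``not too small'' clause and from rounding) gives the claim.

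The main subtlety, and the place where the argument could get tangled, is the doubly-exponential bookkeeping: one has to verify that the crude inequality $c_k \leq 3 c_{k-1}^2$ really does survive until $k \approx \log\log H$, rather than breaking down earlier when $|B_{k-1}|$ shrinks to a handful of points. This is handled by truncating the process one step before $|B_k|$ would drop below a small constant threshold and checking that the threshold costs only $O(1)$ in the final bound on $m$. Once this is under control, no further ideas are needed and the rest is routine.
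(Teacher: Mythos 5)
The paper does not give a proof of this lemma — it is cited to Szemer\'edi~\cite{Sz} and to~\cite{GRS} without argument. Your density-increment proof (iteratively intersecting $A$ with its own shifts, double-counting differences to pick each $\mu_k$, and tracking $c_k = H/|B_k|$ through the recurrence $c_k \leq 3c_{k-1}^2$) is correct and is precisely the standard argument from those sources; the bookkeeping near the end, where you need $|B_{k-1}|\geq 3$ for the recurrence and $|B_{m-1}|\geq 2$ for the final step, is handled as you describe and costs only the $O(1)$ absorbed into $C$.
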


%%KWR906: I moved most of the text Bill suggested, but I'm keeping
%%the following in section 13 where it serves a context there.

%The best known upper and lower bounds appear still to be those of
%Gunderson and R\"odl~\cite{GR}:
%\[
%c^{(1 - \epsilon_c)(2^m - 1)/m} \leq H(m,c) \leq (2c)^{2^{m-1}},
%\]
%where $\epsilon_c \to 0$ as $c \to \infty$. The same upper bound was
%recently ascribed to \cite{GRS2} by Conlon, Fox, and Sudakov
%\cite{CFS} but see also S\'andor~\cite{sandor} with different
%asymptotics. Erd\H{o}s and Tur\'an~\cite{et} proved that $H(2,c)$
%is asymptotic to $c^2$, but the remarks in \cite{BCEG} that less is
%known about $H(m,c)$ for fixed $m \geq 3$ appear still in force, and
%\cite{CFS} remarks that $H(m,2)$ depends on unknown properties of van
%der Waerden numbers.
%BILL COMMENT: REMOVE  the following two paragraph since it is already in the introduction.

%%%%%%%%%%%%%%%%%%%%%%%%%%%%%%%%%%%%%%%%%%%%%%%%%%%%%%%%%%%%%%%%%

\section{Monic polynomials.}
\label{sec:monic}
 
 %%KWR531: Inserting a theorem statement as the goal not only meets a comment by Referee #2 but makes the whole flow much clearer to me. I postpone the substitution x = y/T until the proof of the implication.  Inserting the Ramsey section also made it more important IMHO to have a theorem statement here as the goal.
 %
 % I could with a few more words change Theorem~\ref{thm:goal} to say:
 % "Let $g(y,t) \in \Zed[y,t]$ be monic for any fixed value of $t$.  If there exists $t_0$ such that for all $t > t_0$, $g(y,t)$ is reducible in $\Zed[y]$, then $g(y,t)$ is reducible in $\Que[y,t]$."
 
Hilbert begins by reducing his general problem to the case of
\emph{monic polynomials in one variable $x$} with \emph{rational}
coefficients. That is, he shows that the following statement suffices to prove
Theorem~\ref{irreducibility-contra}.

\begin{thm}\label{thm:goal}  %%KWR: Now 12
% Let $g(y,t) \in \Zed[y,t]$ which is of degree $\geq 1$ in $y$.  If there exists $t_0$ such that for all integers $t ^*> t_0$, $g(y,t^*)$ is monic and reducible in $\Zed[y]$, then $g(y,t)$ is reducible in $\Que[y,t]$.
%%KWR906: Per several things noted above, some changes:
Let $g(y,t) \in \Zed[y,t] \setminus \Zed[t]$.  If there exists $t_0$ such that for all integers $t_1 \geq t_0$, $g(y,t_1)$ is monic and reducible in $\Zed[y]$, then $g(y,t)$ is reducible in $\Que[y,t]$.
\end{thm}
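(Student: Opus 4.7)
The plan is to split $g$ in a field of Puiseux series, use the hypothesis to color integers $t_1$ by which subset of roots gets grouped into their monic $\Zed[y]$ factor, apply Hilbert's cube lemma to extract monochromatic structure, and then leverage that structure to force the symmetric functions of the chosen roots to be honest rational polynomials in $t$.

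\emph{Monic reduction and Puiseux expansion.} The leading $y$-coefficient of $g(y,t)$ is a polynomial in $t$ equal to $1$ at infinitely many integers, hence identically $1$; so $g$ is monic in $y$ over $\Zed[t]$, of some degree $n \geq 1$. Over a suitable field of Puiseux series in $t^{1/r}$ one has
\[
g(y,t) = \prod_{j=1}^{n}\bigl(y - y_j(t)\bigr), \qquad y_j(t) = P_j(t) + \epsilon_j(t),
\]
where each $P_j$ is a finite principal part (a polynomial in $t^{1/r}$ with algebraic coefficients) and $\epsilon_j(t) \to 0$ as the integer $t \to \infty$.

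\emph{Coloring and cube lemma.} For each $t_1 \geq t_0$ the reducibility hypothesis supplies a proper subset $J_{t_1} \subsetneq \{1,\ldots,n\}$ with $\prod_{j \in J_{t_1}}(y - y_j(t_1)) \in \Zed[y]$. The assignment $t_1 \mapsto J_{t_1}$ is a coloring of $\{t_1 \in \Zed : t_1 \geq t_0\}$ with at most $2^n - 2$ colors, so Lemma~\ref{le:cube} produces, for arbitrarily large $m$ and arbitrarily late starting points $\beta$, monochromatic $m$-cubes sharing a common subset $J$. For each $\ell$, set $\sigma_\ell(t) := e_\ell(\{y_j(t): j \in J\})$, a Puiseux series whose value at every cube point lies in $\Zed$.

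\emph{Main obstacle --- forcing polynomiality.} Write $\sigma_\ell(t) = Q_\ell(t) + \eta_\ell(t)$ with $Q_\ell$ the principal part and $\eta_\ell(t) \to 0$. An $m$-fold forward difference of $\sigma_\ell$ along the cube directions annihilates $Q_\ell$ whenever $m > \deg Q_\ell$; the same combination of $\sigma_\ell$-values is an integer combination of integers, hence an integer, yet is bounded in absolute value by $2^m \max |\eta_\ell|$, which is less than $1$ for $\beta$ large. So this integer vanishes. Letting $\beta$ and the increments vary and recombining such identities forces $\eta_\ell \equiv 0$, and a parallel integrality argument on the leading exponent rules out fractional powers of $t$ in $Q_\ell$. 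Hence each $\sigma_\ell(t)$ is a polynomial in $t$ with algebraic coefficients, so $h(y,t) := \prod_{j \in J}(y - y_j(t))$ is a nontrivial polynomial factor of $g$ in $\overline{\Que}[y,t]$. Finally, let $h_0$ be any irreducible $\overline{\Que}[y,t]$-factor of $h$, and let $H$ be the product over the $\mathrm{Gal}(\overline{\Que}/\Que)$-orbit of $h_0$; then $H \in \Que[y,t]$ properly divides $g$, proving reducibility in $\Que[y,t]$.
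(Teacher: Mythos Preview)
Your overall architecture matches Hilbert's, but the key analytic step is broken. You assert that an $m$-fold forward difference in $t$ along the cube directions ``annihilates $Q_\ell$ whenever $m>\deg Q_\ell$.'' That is false: $Q_\ell$ is a polynomial in $t^{1/r}$, not in $t$, and integer-step differences do not kill fractional powers. For instance, no iterate of $\Delta_\mu f(t)=f(t+\mu)-f(t)$ sends $t^{1/2}$ to zero; it merely lowers the growth order. Consequently the $m$-th difference of $\sigma_\ell$ is a mixture of contributions from $Q_\ell$ and from $\eta_\ell$, and you cannot isolate a putative first nonzero tail coefficient to derive a contradiction. The sentence ``letting $\beta$ and the increments vary and recombining such identities forces $\eta_\ell\equiv 0$'' is therefore not a proof. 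This is exactly the obstacle Hilbert's reparametrization removes: he evaluates only at $t=\sigma^{k}t_0$ and works in the variable $\sigma$, so that each $\sigma_\ell$ becomes a genuine Laurent-type series in \emph{integer} powers of $\sigma$; then $m$ differences with $m$ one more than the polynomial degree in $\sigma$ annihilate the principal part \emph{exactly}, leaving a series whose leading term is a nonzero multiple of the first surviving $D$-coefficient, and the contradiction follows.

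Two further points. Your ``parallel integrality argument on the leading exponent'' to exclude fractional powers of $t$ is a placeholder, not an argument; in the paper this is an entirely separate and rather delicate step (a second pigeonhole over $2^n-1$ prime choices of $\tau_0$, forcing two primes $p\neq p'$ to share a formal factor and hence $p^{\,r}/p'^{\,r}\in\Que$, which is impossible for noninteger $r$). Finally, the Galois-orbit product at the end is both unnecessary and, as written, unjustified: a proper factor over $\overline{\Que}[y,t]$ need not yield a proper factor over $\Que[y,t]$ (think of $y^2-2t^2$), so the product $H$ could be $g$ itself. What actually saves you is that each $\sigma_\ell$, once known to be a polynomial in $t$, takes integer values at infinitely many integers, and Cramer's rule then forces its coefficients into $\Que$; the factor already lives in $\Que[y,t]$ with no descent needed.
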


\noindent
Proving Theorem~\ref{thm:goal} will occupy the upcoming sections, but here we show the following.

\begin{prop} % 7  %%KWR: Now 13
\label{prop:Que2Zed}
% If $g(y,t)$ is reducible over $\Que[y,t]$, then $f(x,t)$ is reducible over $\Zed[x,t]$.
Theorem~\ref{thm:goal} implies Theorem~\ref{irreducibility-contra}.
\end{prop}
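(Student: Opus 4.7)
The plan is to monicize $f$ by a change of variable and then pull back a factorization of the resulting monic polynomial to one of $f$ itself. Writing $f(x,t) = T(t)\,x^n + T_1(t)\,x^{n-1} + \cdots + T_n(t)$ with $n = \deg_x f \geq 1$ and $T\not\equiv 0$, I would define
\[
g(y,t) \;:=\; T(t)^{n-1}\,f\!\left(y/T(t),\, t\right) \;=\; y^n + T_1(t)\,y^{n-1} + T(t)\,T_2(t)\,y^{n-2} + \cdots + T(t)^{n-1}\,T_n(t).
\]
A routine expansion shows that $g \in \Zed[y,t]\setminus\Zed[t]$, that $g$ is monic of $y$-degree $n$, and that $g(T(t)\,x,\, t) = T(t)^{n-1}\,f(x,t)$.

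Next I would verify that $g$ inherits the reducibility hypothesis. Since $T$ has only finitely many integer roots, enlarging $t_0$ if necessary ensures $T(t_1)\neq 0$ for every integer $t_1\geq t_0$. For such $t_1$, any nontrivial factorization $f(x,t_1) = p(x)\,q(x)$ in $\Zed[x]$ with $\deg p,\deg q \geq 1$ transports through the substitution $y = T(t_1)\,x$ to a factorization of $g(y,t_1)$ in $\Que[y]$ into factors of positive $y$-degree. As $g(y,t_1)$ is monic in $\Zed[y]$, Gauss's lemma upgrades this to a factorization in $\Zed[y]$ by monic factors. Hence the hypotheses of Theorem~\ref{thm:goal} hold for $g$, and its conclusion provides $g(y,t) = A(y,t)\,B(y,t)$ in $\Que[y,t]$ with $A,B$ non-constant.

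I would then argue that both $A$ and $B$ must have positive $y$-degree: a factor lying in $\Que[t]$ would divide the leading $y$-coefficient of $g$, which is $1$, forcing that factor into $\Que^{\ast}$ and contradicting its non-triviality. Substituting $y = T(t)\,x$ into the identity $g = A\,B$ and using $g(T(t)\,x,\,t) = T(t)^{n-1}\,f(x,t)$ yields
\[
T(t)^{n-1}\, f(x,t) \;=\; A(T(t)\,x,\, t)\, B(T(t)\,x,\, t)
\]
in $\Que[x,t]$. Read in $\Que(t)[x]$, where $T(t)$ is a unit, this is a genuine factorization of $f$ into two polynomials of positive $x$-degree.

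The \emph{hard part} will be the final descent of this factorization into $\Zed[x,t]$. Two invocations of Gauss's lemma finish the job: first with the UFD $\Zed[t]$ and its fraction field $\Que(t)$, to move the factorization from $\Que(t)[x]$ down to $\Zed[t][x] = \Zed[x,t]$, and then with $(\Zed,\Que)$ to clear any residual rational scalar. The delicate point I would watch is that the powers of $T(t)$ multiplied in and later divided out cannot accidentally absorb one factor into a polynomial in $t$ alone; the monic-in-$y$ observation above is exactly what rules this out and makes the descent go through cleanly.
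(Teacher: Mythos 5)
Your proposal is correct and follows essentially the same route as the paper: the same monicization $g(y,t)=T^{n-1}f(y/T,t)$, the same transport of the reducibility hypothesis via Gauss's lemma for monic integral polynomials, an application of Theorem~\ref{thm:goal}, and a descent of the resulting factorization from $\Que(t)[x]$ to $\Zed[x,t]$ (the paper packages that last step as part~(d) of its Lemma~\ref{lem:factors}). Your extra care about zeros of $T(t_1)$ and about neither factor degenerating into $\Que[t]$ only tightens details the paper leaves implicit.
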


%\noindent
The following transformation is used not only to prove the implication but also to motivate the machinery for proving Theorem~\ref{thm:goal}.
Recall from \eqref{integral-T} the integral polynomials $T,T_1,\dots,T_n$ in $t$ such that
$f(x,t) = Tx^n + T_1 x^{n-1} + \cdots +T_{n-1} x + T_n.$
Note that $T$ and the $T_j$'s become integer constants for any fixed value of $t$.  Define
\begin{equation}\label{gyt}
g(y,t) = y^n + S_1 y^{n-1} + \cdots + S_{n-1} y + S_n,
\end{equation}
where for each $j$, $1 \leq j \leq n$, $S_j = T_j T^{n-j}$.  Then
\[
g(y,t) = T^{n-1} f(\frac{y}{T},t).
\]
Thus $g(y,t)$ is defined by rational transformation of the argument $x$ in $f(x,t)$
but still comes out integral.  To work with this, we preface four observations,
of which the first is a famous result of Gauss called his \emph{polynomial lemma}.

%%KWR602: My restructuring of the lemma to cover comments by Referee #3 as well as #2 also gives (b) and (c) more incremental statements.  BTW, I caught a missing T^{n-1} in the formula just above---but please double-check it and the new lemma version.
%%KWR906: Notice we are saying "factors" rather than "reduces", but IMHO that's appropriate in a PID.

\begin{lemma} % 8  %%KWR: now 14
\label{lem:factors}
\begin{enumerate}
\item % (a)
If a monic polynomial in $\Zed[y]$ factors in $\Que[y]$, then it factors in $\Zed[y]$.
\item % now (b)
A polynomial $\psi(y)$ divides a polynomial $f \in \Zed[x,y]$ if and only if, upon writing
\[
f(x,y) = a_0(y)x^{n} + a_1(y)x^{n-1} +\cdots+ a_{n-1}(y)x + a_n(y),
\]
we have that $\psi(y)$ is a factor of each $a_j(y)$.
\item % now (c)
If $\psi(y)$ is irreducible and divides $f\cdot g$, where $g \in \Zed[x,y]$ is written as
\[
g(x,y) = b_0(y)x^{n} + b_1(y)x^{n-1} +\cdots+ b_{n-1}(y)x + b_n(y),
\]
then either $\psi(y)$ is a factor of all $a_i(y)$ or it is a factor of all $b_i(y)$.
\item % now (d)
If $f(x,y)$ can be factored into the product of two polynomials in $x$
whose coefficients are rational functions of $y$ with integral
coefficients, i.e., belong to $\Que(y)[x]$, then it can be factored into the
product of two polynomials in $\Zed[x,y]$.
\end{enumerate}
\end{lemma}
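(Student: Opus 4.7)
My plan is to handle the four parts in order, each reducing to either Gauss's lemma on primitive polynomials or a direct coefficient inspection, with (b) feeding cleanly into (c).

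For (a), I would clear denominators in a rational factorization $f = g \cdot h$ by writing $g = \alpha g_1$ and $h = \beta h_1$ with $\alpha, \beta \in \Que$ and $g_1, h_1 \in \Zed[y]$ primitive with positive leading coefficients. Gauss's lemma then says $g_1 h_1$ is still primitive in $\Zed[y]$, and comparing it against the primitive monic $f$ forces $\alpha\beta = 1$, yielding an integer factorization. Part (b) is a direct coefficient argument: if $\psi \mid f$ in $\Zed[x,y]$, writing $f = \psi(y) \cdot q(x,y)$ and expanding $q = \sum q_j(y) x^{n-j}$ gives $a_j(y) = \psi(y) q_j(y)$ for each $j$, so $\psi \mid a_j$ in $\Zed[y]$; the converse is immediate by reassembling $q$ from the quotients $a_j/\psi$.

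For (c), since $\Zed[y]$ is a UFD, the irreducible element $\psi$ is prime there, so $\Zed[y]/(\psi)$ is an integral domain. It follows that $(\Zed[y]/(\psi))[x] \cong \Zed[x,y]/(\psi)$ is also a domain, so $\psi \mid f \cdot g$ in $\Zed[x,y]$ forces $\psi \mid f$ or $\psi \mid g$ there; applying (b) to each case translates into the desired statement about the $a_i$ or the $b_i$. For (d), I would start from a factorization $f = g \cdot h$ in $\Que(y)[x]$ and extract rational factors $\alpha(y), \beta(y) \in \Que(y)$ so that $g = \alpha(y) g_0$ and $h = \beta(y) h_0$, where $g_0, h_0 \in \Zed[y][x]$ are primitive as polynomials in $x$ over the UFD $\Zed[y]$. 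Gauss's lemma at this level ensures $g_0 h_0$ is again primitive, and matching $f = \alpha(y)\beta(y)\cdot(g_0 h_0)$ against the content--primitive decomposition of $f$ itself forces $\alpha(y)\beta(y)$ to equal the content of $f$ up to a unit in $\Zed[y]$. In particular $\alpha\beta \in \Zed[y]$, and absorbing it into one factor delivers a factorization in $\Zed[x,y]$.

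The only real subtlety is keeping track of which ring hosts the notions of \emph{content} and \emph{primitivity}: it is $\Zed$ in part (a) but the larger UFD $\Zed[y]$ in part (d). Once that viewpoint is fixed, Gauss's lemma and its multiplicative-content corollary do all the real work; the remaining steps are mechanical.
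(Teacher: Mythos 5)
Your proof is correct, but it is substantially more self-contained than, and in places structurally different from, what the paper does. The paper offers only intuition for (a) (``the product of two monic polynomials, each with a noninteger rational coefficient, must have a noninteger rational coefficient'') and defers the proofs of (b) and (c) to B\^ocher; its only real argument is for (d), which it deduces \emph{from} (b) and (c): write the given factorization as $\frac{f_1}{\varphi_1}\cdot\frac{f_2}{\varphi_2}$ with $f_i$ sharing no factor with $\varphi_i$, use (c) to conclude $\varphi_2 \mid f_1$ and $\varphi_1 \mid f_2$, and cancel via (b). You instead prove (d) directly by the content--primitive decomposition over the UFD $\Zed[y]$ with fraction field $\Que(y)$, never invoking (b) or (c); and your proof of (c) goes through the observation that $\Zed[x,y]/(\psi) \cong (\Zed[y]/(\psi))[x]$ is an integral domain, i.e., that $\psi$ generates a prime ideal, rather than the elementary coefficient manipulations of B\^ocher's era. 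Both routes are sound. The paper's treatment of (d) has the charm of showing how (b) and (c) were actually used by Hilbert as working tools, while your version buys independence from the citation and makes explicit the point you rightly flag---that the content/primitivity bookkeeping lives over $\Zed$ in (a) but over $\Zed[y]$ in (d)---which is exactly the conceptual unification the paper leaves implicit.
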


\begin{proof}
Part (a) can be understood as saying the product of two monic polynomials, each with at least one
rational noninteger coefficient, must have at least one rational noninteger coefficient.  The 
intuition for (b) and (c) is that since $x$ occurs nowhere else, it cannot help
$\psi(y)$ divide $f$ or $f\cdot g$ any other way than stated; proofs may be found in
B\^ocher \cite[pp.~203--204]{Boc}.
%has a formal proof.
To prove
(d), we write the given factorization in the form
\[
f(x,y) 
= \frac{f_1(x,y)}{\varphi_1(y)}\cdot \frac{f_2(x,y)}{\varphi_2(y)},
\]
where $f_1(x,y)$, $f_2(x,y)$, $\varphi_1(y)$ and $\varphi_2(y)$ are
integral polynomials such that $f_1$ is not divisible by any factor of
$\varphi_1(y)$ and $f_2$ is not divisible by any factor of
$\varphi_2(y)$. By part~(c), since $f_1 \cdot f_2$ is divisible by
$\varphi_1 \cdot \varphi_2$, $f_1$ has the complete polynomial
$\varphi_2$ as a factor and $f_2$ has the complete polynomial
$\varphi_1$ as a factor. By~(b) we can cancel $\varphi_2$ from the
coefficients of $f_1$ and we can cancel $\varphi_1$ from the
coefficients of~$f_2$. This gives us our factorization into two
polynomials in $\Zed[x,y]$.
\end{proof}

\begin{proof}
[Proof of Proposition~\ref{prop:Que2Zed}]   %%KWR531: Added "of Proposition 12"
%%KWR906: Here too the referee could kvetch that t is a constant in this context but we are treating it like a variable.  So I prepended a sentence and kept to my scheme of using both t_1 and t_0, the latter for asymptotics.
Let $t_1 \geq t_0$ in the hypothesis of Theorem~\ref{thm:goal}.
Recall that $g(y,t) = f(\frac{y}{T},t)$ for any $t$.  
%%KWR906: Use t_1 in the above line too?
Since $f(x,t_1)$ factors in $\Zed[x]$, $g(y,t_1)$ factors in $\Que[y]$.  But since $g(y,t_1)$ is an integral polynomial
and is monic in the one variable $y$, it factors in $\Zed[y]$ by Gauss's polynomial lemma.

Thus we have satisfied the hypothesis of Theorem~\ref{thm:goal}---and with the same $t_0$ as in
Theorem~\ref{irreducibility-contra}.  Assuming its conclusion gives us
\[
g(y,t) = \Psi(y,t) \Psi'(y,t),
\]
%where $\Psi(y,t) \in \Que[x,t]$ and $\Psi'(y,t) \in \Que[x,t]$. where $\Psi(y,t)$ and $\Psi'(y,t)$ belong to $\Que[y,t]$. 
%%KWR531: The AMM compositor as well as we completely missed that this line was garbled.
%
where $\Psi(y,t)$ and $\Psi'(y,t)$ belong to $\Que[y,t]$.  Substituting back $y = xT$ yields the following equation for our
original polynomial:
\[
f(x,t) = \frac{\Phi(x,t) \Phi'(x,t)}{AT^{n-1}},
\]
where $\Phi(x,t)$ and $\Phi'(x,t)$ both belong to $\Zed[x,t]$, 
$A \in \Zed$, and $T \in \Zed[t]$ . Now part~(d) of our lemma completes
the proof.
\end{proof}

%%KWR531: Inserted "now"

% The whole proof of Theorem~\ref{irreducibility-contra} (and hence Theorems~\ref{irreducibility-1} and~\ref{irreducibility}) now boils down to showing that \emph{if for almost all $t \in \Zed$, $g(x,t)$ is reducible over $\Zed[x]$, then $g(x,y)$ is reducible over $\Que[x,y]$.}

%Proposition~\ref{prop:Que2Zed} is a two-variable variant of an important and famous result that algebraists call \emph{Gauss's Lemma for Polynomials.}  The one-variable case that appears in paragraph 42 of Gauss's \emph{Disquisitiones} \cite{Gauss} is as follows:
%
%\emph{The product of two monic polynomials over the rationals with at least one rational non-integral coefficient is itself a monic polynomial over the rationals with at least one rational non-integral coefficient.}
%
%The modern version follows at a stroke upon observing that if even one of its factors has a non-integral coefficient, so does the original reducible integral polynomial, a contradiction:
%
%\emph{If a monic integral polynomial is reducible over the rational numbers, then its polynomial factors are themselves integral polynomials.}
%
%We know of  no occurrences of Gauss's version in literature outside the \emph{Disquisitiones}, yet all proofs of the modern statement are modeled on Gauss's original proof, as here.  
Gauss used his lemma, which appeared on page 42 of his \emph{Disquisitiones} \cite{Gauss}, to give the first proof of the irreducibility of the cyclotomic polynomial of prime degree over the rationals.  As we've seen, Hilbert used it to reduce the irreducibility theorem to the case of monic polynomials with rational coefficients.  
But to go further and prove Theorem~\ref{thm:goal}, a new tool is needed.

%%KWR531: Added last sentence as a transition.

%%%%%%%%%%%%%%%%%%%%%%%%%%%%%%%%%%%%%%%%%%%%%%%%%%%%%%%%%%%%%%%%%

\section{Puiseux series.}
\label{sec:pseries}

%%KWR906: The problem with implementing Bill's suggestion "Also- g is of degree n, this should be mentioned." is first that we need to specify "y has degree n in g" and second, we haven't defined this and it's a little slimy to define.  What's being left out is the idea that we only need properties as t gets large (does the "at infinity" channel this?)---recall my "weaker" line which I commented out in the intro.  This is also touching Bill's "painful" comment 7:
%
%7) Referee 3 has a point and I have a partial solution but it might be
%painful.
%
%Note that following
%
%Theorem 4 is f(x,y) where x,y are variables, and you look at an
%infinite number of t\in Z
%
%Theorem 12 is g(y,t) where y,t are variables and you also look at
%an infinite number of t\in Z
%
%I would suggest that we change Theorem 12 (and, alas, much of the
%rest o the paper) to g(x,y) and look at infinitely many t in Z.

% The fundamental theorem of algebra shows us that 
% the equation $g(y,t) = 0$, where $y$ has degree $n$ in $g$, has $n$ complex roots for each value of $t$. 
Let $n$ be the maximum degree of $g(y,t_1)$ as a polynomial in $y$ as $t_1$ varies.  It is possible that $g(y,t_1)$ has degree less than $n$ for some values $t_1$ owing to cancellations, but these values are isolated and we will effectively be able to ignore them.  
%The fundamental theorem of algebra shows that the equation $g(y,t_1) = 0$ (for any values of $t_1$) has $n$ complex roots counting multiplicities.
The fundamental theorem of algebra shows that the equation $g(y,t_1) = 0$ (for other values of $t_1$) has $n$ complex roots.
% BILL911- the sentence was `polynomial in $x$ as $t_1$ varies' and I made it
% `polynomial in $y$ as $t_1$ varies'
% BILL911- I changed `from other values of $t_1$' to `from any values of $t_1$'
Thus, thinking informally, we can postulate $n$ functions of $t$, say $y_1(t),\dots,y_n(t)$,
that satisfy the equations. Hilbert brought this into reality by using a refined form of the
\emph{implicit function theorem}, which we refer to as \emph{Puiseux's
theorem} (see discussion of origins below). It says that the $n$ root
functions $y_1(t),\dots,y_n(t)$ can be expressed in a concrete way by
means of fractional power series in decreasing powers of the variable.
These are the so-called \emph{Puiseux series at infinity}, defined as follows.

\begin{definition} % 1
\label{de:pu}
%Let $m \in \Nat$. 
%%KWR906: I used Bill's suggestion to elide m, but I felt writing "u(x)" might be confusing when the reader sees u(x^{1/k}). Since we have so many restrictions to coeffs in Z or Q, I chose to emphasize the coeffs can be complex while circumlocuting away from "X[x]".   Not only don't we refer to m this way, it could confuse with the cube-lemma m.
A Puiseux series at infinity is an expression of the
form
\[
u(x^{1/k}) + \sum_{i=1}^\infty \frac{\mathrm{B}_i}{x^{i/k}},
\]
%%where $u(x) \in \Cee[x]$ is of degree $m$, $k \in \Nat^+$, and
where $u$ is a polynomial with possibly complex coefficients, $k$ is a positive integer, and
$\mathrm{B}_1,\mathrm{B}_2,\ldots \in \Cee$.
\end{definition}

\noindent
We adopt the next theorem statement from \cite[pp.~80--81]{Tz}
with slight alterations in notation and formatting. Its power series
are called \emph{Puiseux expansions at infinity}.

\begin{thm}[Puiseux's Theorem] % 9
Given $g(y,t)$ as above, there are $n$
distinct  power series
\begin{equation}
\label{Puiseux1} % (1)
\begin{split}
y_1(t) &= \mathrm{A}_{11}\tau^h + \mathrm{A}_{12}\tau^{h-1} 
+\cdots+ \mathrm{A}_{1,h+1} + \frac{\mathrm{B}_{11}}{\tau} 
+ \frac{\mathrm{B}_{12}}{\tau^2} + \frac{\mathrm{B}_{13}}{\tau^3} 
+ \cdots
\\
y_2(t) &= \mathrm{A}_{21}\tau^h + \mathrm{A}_{22}\tau^{h-1} 
+\cdots+ \mathrm{A}_{2,h+1} + \frac{\mathrm{B}_{21}}{\tau} 
+ \frac{\mathrm{B}_{22}}{\tau^2} + \frac{\mathrm{B}_{23}}{\tau^3} 
+ \cdots
\\
 & \vdots
\\
y_n(t) &= \mathrm{A}_{n1}\tau^h + \mathrm{A}_{n2}\tau^{h-1} 
+\cdots+ \mathrm{A}_{n,h+1} + \frac{\mathrm{B}_{n1}}{\tau} 
+ \frac{\mathrm{B}_{n2}}{\tau^2} + \frac{\mathrm{B}_{n3}}{\tau^3} 
+ \cdots
\end{split}
\end{equation}
which are all convergent for 
% $t$   %%KWR602: I think clearer to say \tau here, doesn't matter technically since t = \tau^k.  
$\tau$
greater than some constant, where the following hold:
\begin{enumerate}
\item % (a)
For a certain positive integer $k$, $\tau = t^{1/k}$, where the
positive real value of the root is meant;
\item % (b)
The given number $h$ is the highest positive exponent of $\tau$ that
occurs.
\item % (c)
All coefficients $\mathrm{A_{i,j}}$ and $\mathrm{B_{i,j}}$ are well-defined uniquely determined complex numbers;
\item % (d)
Any formal power series $y(t)$ satisfying the formal identity
$g\{y(t),t\}\equiv 0$ and having properties analogous to those of the
series \eqref{Puiseux1} necessarily coincides with one of
the $n$ series above.
\item % (e)
The following formal identity holds:
\[
g(y,t) \equiv \prod_{i=1}^{n} \{y - y_i(t)\}.  
\]
\qed
\end{enumerate}
\hideqed
\end{thm}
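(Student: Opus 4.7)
The plan is to reduce Puiseux's theorem at infinity to the classical version at a finite point, then build the $n$ root series coefficient by coefficient via a Newton-polygon procedure, and finally establish convergence, uniqueness, and the product identity.

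First I would change variables to turn $t = \infty$ into an ordinary point. Setting $s = 1/t$ and clearing denominators converts $g(y,t) = 0$ into $\tilde g(y,s) = 0$ for an integral polynomial $\tilde g$ whose behavior at $s = 0$ encodes the asymptotics of the roots as $t \to \infty$. The Newton polygon of $\tilde g$ at $s = 0$ then drives the whole analysis: its slopes determine the growth exponents of the branches, the largest slope yields the distinguished $h$ of the statement, and the least common denominator of the reduced slopes forces the fractional variable $\tau = t^{1/k}$. To each edge of the polygon one attaches a characteristic equation whose nonzero complex roots supply the admissible leading coefficients $A_{i1}$; counting with multiplicities across all edges recovers exactly $n$ candidates, in agreement with the $y$-degree of $g$.

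Next, given a leading term $y = A\tau^h + \eta$, I would substitute back into $g(y,\tau^k)$ to obtain a transformed polynomial equation for $\eta$. When the new Newton polygon has a simple lowest vertex, the implicit function theorem produces the next coefficient uniquely and all subsequent coefficients by a convergent recursion. When multiplicities persist, I would iterate the polygon step, refining $k$ if necessary, until every branch separates; a finiteness argument (using that the discriminant of $g$ in $y$ is a nonzero polynomial in $t$) guarantees termination. Convergence of each resulting formal series for $\tau$ greater than some constant can be read off from a standard majorant estimate on the recursion, or equivalently from Weierstrass preparation applied to $\tilde g$ near $s = 0$. Uniqueness (property (d)) follows by induction on the order of agreement: any formal Puiseux solution must satisfy the same characteristic equation at every stage of the polygon process and so coincides with one of the constructed series. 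The product identity (e) is then immediate, since $g$ is monic in $y$ of degree $n$ and has $n$ distinct roots $y_1(t),\dots,y_n(t)$ in the field of convergent Puiseux series at infinity, so it splits as $\prod_{i=1}^n (y - y_i(t))$.

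The main obstacle will be the Newton-polygon bookkeeping when characteristic equations have repeated roots. One has to check that the refinement process always terminates after finitely many steps with a single common denominator $k$, so that all $n$ branches can simultaneously be written on the unified exponent ladder $\tau^h, \tau^{h-1}, \dots, \tau^{-1}, \tau^{-2}, \dots$ displayed in \eqref{Puiseux1}. Once that uniform $k$ is secured, the statements about $h$ being the highest exponent that actually occurs, the well-definedness of the coefficients $\mathrm{A}_{ij}$ and $\mathrm{B}_{ij}$, and the factorization all fall out of the construction without further surprises.
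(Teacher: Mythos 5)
The paper does not actually prove Puiseux's theorem: it quotes the statement from Tzanakos \cite{Tz}, attributes the original to Puiseux (via Runge) in a footnote, and closes the theorem environment with a $\square$ to signal that it is being imported as a black box. So there is no paper proof to compare your argument against. That said, your Newton-polygon outline is the standard modern route to the result and is sound at the level of a sketch: the change of variables $s = 1/t$ to move the analysis to an ordinary point, the polygon-driven extraction of leading exponents and leading coefficients edge by edge, the iterated substitution $y = A\tau^h + \eta$ with refinement of the ramification index, the discriminant-based termination argument, majorant (or Weierstrass preparation) estimates for convergence, and the monic splitting of $g$ over the field of convergent Puiseux series at infinity together constitute a coherent proof strategy for all five enumerated properties. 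The two places you would need to flesh out most carefully are (i) the claim that a single uniform $k$ suffices for all $n$ branches, which requires taking the least common multiple of the ramification indices produced across every edge and every refinement stage; and (ii) the tacit assumption that $g$ is squarefree in $y$, which is what makes the $n$ series \emph{distinct} and the product in (e) exact --- in the paper's setting this holds because $g$ descends from an irreducible $f$, but it deserves an explicit word. Your sketch therefore adds a level of self-containment the paper deliberately does not attempt, at the cost of needing substantial bookkeeping to become a full proof.
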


Hilbert \cite{Hilbert1892} ascribed the idea of employing Puiseux series to Runge in a work that had appeared
three years earlier, and cited it in a footnote exactly like this.%
\footnote{The original work of Puiseux can be found in
\emph{Liouville's} Journal, vols.~15, 16 (1850, 1851). These
expansions have already been used by \emph{C. Runge} to derive
necessary conditions that an equation between two unknowns have
infinitely many integral solutions. See this Journal, vol.~100,
p.~425.
[Footnote by Hilbert]}
% 
%%KWR906: Next is bracket of Bill's suggestion.  I think here I've finessed around the onus perceived by Bill of changing "g(y,t)" to "g(x,y)" per the intro.  There were good motives for the change to t, and the referees haven't complained about *that* inconsistency.
In order for the idea to get off the ground, however, we need to have at least one series, i.e., $n \geq 1$.  This is where the condition $g(y,t) \in \Zed[y,t] \setminus \Zed[t]$ in Theorem~\ref{thm:goal}, reflecting emendations in Section~\ref{sec:intro}, is used.

Hilbert then notes---and this is the reason to reduce the problem to monic
polynomials---the relation between the elementary symmetric functions
of the roots $y_1,y_2,\dots,y_n$ and the coefficient polynomials
$S_1,S_2,\ldots,S_n$, namely:
\begin{equation}
\label{SymFun} % (2)
\begin{split}
S_1 &= -(y_1 + y_2 +\cdots+ y_n)
\\
S_2 &= (-1)^2(y_1y_2 + y_1y_3 +\cdots+ y_{n-1}y_n)
\\
&\vdots
\\
S_n &= (-1)^n(y_1y_2y_3\cdots y_{n-1}y_n).\\
\end{split}
\end{equation}

%%KWR602: Changed wording here.  Also, my edits above have created a few lines of LaTeX slack.  We could hence insert a few lines saying /why/ the S_j here must become the same as the S_j in the previous section.

The insight is that by Puiseux's theorem, when we plug the expansions
\eqref{Puiseux1} into the symmetric functions \eqref{SymFun},
\emph{the resulting fractional power series for the coefficients all
collapse down to 
% the integral polynomials $S_k$ in~$t$ from \eqref{eyt}}.
integral polynomials in $t$.  And those must be the polynomials $S_k$ in~$t$ from \eqref{gyt}.}
For later reference, it will be convenient to state 
% this simple 
the former observation as a theorem, calling the part of
the expansion with positive exponents the \emph{polynomial part}.

%%KWR531: Maybe adding the words "and only if" will make Referee 3 happier?

\begin{thm} % 10
\label{thm:nub}
For any $g(y,t) \in \Cee[y,t]$ the elementary symmetric functions of
$n$ Puiseux expansions \eqref{Puiseux1} parameterizing the roots for any $t$ collapse down to polynomials
in $\Zed[t]$ if (and only if):
\begin{enumerate}
\item % (a)
The coefficients of 
% all 
the \emph{negative} powers of $\tau$ in the
resulting fractional power series for the coefficients are
\emph{all equal to zero}.
\item % (b)
\label{polypart}
The numerical coefficients of the \emph{``polynomial part"} of the
resulting fractional power series for the coefficients
\emph{are all integers}.
\item % (c)
The numerical coefficients of the\emph{ positive 
fractional powers} of $\tau$ in the resulting fractional power series
for the coefficients are \emph{all equal to zero}.
\qed
\end{enumerate}
\hideqed
\end{thm}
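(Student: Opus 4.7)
The plan is to recognize that this theorem is essentially a tautological unpacking: a Puiseux series at infinity in $\tau = t^{1/k}$ equals a polynomial in $\Zed[t]$ precisely when it contains only nonnegative integer-multiple-of-$k$ powers of $\tau$, with integer numerical coefficients. My plan is to substitute the Puiseux expansions \eqref{Puiseux1} into the elementary symmetric functions \eqref{SymFun} and apply this tautology to each of the $n$ resulting series, using the uniqueness of Puiseux expansions to match coefficients.

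Concretely, after substitution and collection of like powers of $\tau$, I would observe that for $\tau$ past the common radius of convergence guaranteed by Puiseux's theorem, each elementary symmetric function defines an analytic function $\Sigma_j(\tau)$ that is itself a Puiseux series at infinity in the sense of Definition~\ref{de:pu}, namely a polynomial part in $\tau$ together with a tail in negative powers of $\tau$. For the ``if'' direction I would simply note that imposing (a) kills the negative-power tail, (c) kills every positive $\tau^{\ell}$ with $k \nmid \ell$, and (b) forces the surviving coefficients of $\tau^{k\ell} = t^\ell$ to be integers, so that $\Sigma_j(\tau)$ is literally a member of $\Zed[t]$. For the ``only if'' direction I would take any $P_j \in \Zed[t]$ with $\Sigma_j(\tau) = P_j(t)$, re-present it as a Puiseux series in $\tau$ by writing $P_j(t) = \sum_{\ell \geq 0} c_\ell \tau^{k\ell}$ with $c_\ell \in \Zed$, and compare term-by-term against $\Sigma_j(\tau)$.

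The main obstacle — and really the only nonformal ingredient — is the coefficient-matching step, which rests on the uniqueness of Puiseux expansions at infinity: two convergent fractional power series in $\tau$ that agree as analytic functions for all sufficiently large real $\tau$ must agree termwise. This follows from the standard principle that a convergent (Laurent or Puiseux) series is determined by the analytic function it represents, once one substitutes $t = \tau^k$ and applies the principle in a punctured neighborhood of $\tau = \infty$. Granted this uniqueness, each of (a), (b), (c) is forced coefficient by coefficient, and the theorem follows without further calculation.
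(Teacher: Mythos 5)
Your proposal is correct. The paper in fact supplies no explicit proof of Theorem~\ref{thm:nub} --- it is stated as an immediate consequence of the uniqueness clause in Puiseux's theorem --- and your argument unpacks exactly that implicit reasoning: the elementary symmetric functions of the $y_i(t)$ are themselves convergent Puiseux series at infinity, and uniqueness of such expansions forces the coefficient-by-coefficient characterization (a)--(c).
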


%%KENANDBILL: We changed "x,y" to "y,t" and added the clause "parameterizing the roots for any $t$"

\noindent These three conditions will, with appropriate changes, characterize
the coefficients of any polynomial factor in $\Zed[y,t]$ of $g(y,t)$.
Lemma~\ref{lem:factors} above shows that it suffices to write ``rational
numbers'' instead of ``integers'' in condition~(b).

%%%%%%%%%%%%%%%%%%%%%%%%%%%%%%%%%%%%%%%%%%%%%%%%%%%%%%%%%%%

\section{The formal factors.}
\label{sec:factors}

%%KWR531: I added "nonempty proper" before "subset" to augment the insert of "nontrivial" to address a comment by Referee 3, and added "exclude the two trivial ones" for even more emphasis.

Any nontrivial \emph{formal} polynomial factor of $g(y,t)$ is a polynomial of the
form
\begin{equation}
\label{formal} % (3)
\pi_A(y,t) := \prod_{y_j\in A} (y - y_j),
\end{equation} 
where $A$ is a nonempty proper subset of the roots $\{y_1,y_2,\ldots,y_n\}$. As
Hilbert points out, there are $\binom{n}{2}$ quadratic factors,
$\binom{n}{3}$ cubic factors, $\binom{n}{4}$ quartic factors,
$\binom{n}{5}$ quintic factors, 
% and so on, 
and finally
% $\binom{n}{n-1}$ %%KWR531: Looks ugly depending on where it falls in the line
$n$
factors of degree $n-1$. Additionally, we count the
$n$ linear factors but exclude the two trivial ones, giving a grand total of
\[
\binom{n}{2} + \binom{n}{3} +\cdots+ \binom{n}{n-1} + n = 2^n - 2
\]
possible factors.
So, if $g(y,t)$ is reducible, \emph{some
$\pi_A(y,t)$ must be an integral polynomial factor.} 
% We assign to each set $A$ a unique index $a$ where $a = 1,2,\dots, 2^n - 2$. And by abuse of notation we call $\pi_A(y,t)$ by the name $\pi_a(y,t)$ where $a$ is the unique index assigned to~$A$.
%
Sometimes we prefer to think of $A$ as a single item rather than a set, so we assign it a unique index $a$ where $a = 1,2,\dots, 2^n - 2$. Then
$\pi_a(y,t)$ means the same as $\pi_A(y,t)$.  
These items will become the ``colors'' in the cube lemma.

Let's look at a simple example of a reducible integral polynomial:
\[
g(y,t) := y^3 - t^3.
\]
Then the roots of $g(y,t) = 0$ are $y_1 = t$, $y_2 = \omega t$,
$y_3 = \omega^2 t$ where $\omega^3 = 1$, $\omega \neq 1$.%
\footnote{Moreover, $y_1 = t$, $y_2 = \omega t$, $y_3 = \omega^2 t$
where $\omega^3 = 1$, $\omega \neq 1$ are also the Puiseux expansions
of the roots.} 
When $n = 3$ there are $2^3-2 = 6$ formal factors. Thus the sets $A$
are
\[
\{y_1\}, \quad \{y_2\}, \quad \{y_3\}, \quad 
\{y_1,y_2\}, \quad \{y_1,y_3\}, \quad \{y_2,y_3\},
\]
and we (arbitrarily) assign the indices $a = 1,2,3,4,5,6$ to them,
respectively. Then, by \eqref{formal}, these formal factors are:
\begin{align*}
\pi_{\{y_1\}} &\equiv \pi_1(y,t) = y - y_1 = y - t,
\\
\pi_{\{y_2\}} &\equiv \pi_2(y,t) = y - y_2 = y - \omega t,
\\
\pi_{\{y_3\}} &\equiv \pi_3(y,t) = y - y_3 = y - \omega^2 t,
\\
\pi_{\{y_1,y_2\}} &\equiv \pi_4(y,t) = (y - y_1)(y - y_2) 
= y^2 + \omega ty + \omega^2 t^2,
\\ 
\pi_{\{y_1,y_3\}} &\equiv \pi_5(y,t) = (y - y_1)(y - y_3) 
= y^2 + \omega^2 ty + \omega t^2,
\\ 
\pi_{\{y_2,y_3\}} 
&\equiv \pi_6(y,t) = (y - y_2)(y - y_3) = y^2 +  ty +  t^2.
\end{align*}
We observe that $\pi_1(y,t)$ and $\pi_6(y,t)$ are \emph{integral}
polynomial factors, whereas the other four are not.

%%%%%%%%%%%%%%%%%%%%%%%%%%%%%%%%%%%%%%%%%%%%%%%%%%%%%%%%%%%%%%%%%%%

\section{Using the pigeonhole principle.}
\label{sec:php}

%%KWR531: Tweaked to improve line and page breaks.  I see how you answered the referee's comment.

Our problem now is to discover \emph{at least one} formal factor $\pi_{\alpha}$ that is an
\emph{integral} polynomial.  (We note that its complementary factor is also an integral polynomial.) We begin our search by applying our
hypothesis.

%Let $t_0$ be an integer greater than $C''$, 
%%KWR531: Here is where we get rid of the dangling C''.  Is this all correct?

Take $t_0$ from the hypothesis of Theorem~\ref{thm:goal}
and $\tau_0$ to be the
positive $k$th root of~$t_0$. Usually, $\tau_0$ will be
irrational. By hypothesis, if we substitute $\tau_0$ into all of the
coefficient series of the formal factors $\pi_a(y,t)$, \emph{at least
one of them will be an integral polynomial in $y$}.

Now, along with Hilbert, we observe that if we substitute $2\tau_0$ into all
of the coefficient series of the formal factors $\pi_a(y,t)$, then
\emph{at least one of them will be an integral polynomial in $y$}, by
the assumption that $g(y,2^k t_0)$ is reducible.

Again, if we substitute $3\tau_0$ into all of the coefficient series
of the formal factors $\pi_a(y,t)$ \emph{at least one of them will be
an integral polynomial in~$y$}, by the assumption that $g(y,3^kt_0)$
is reducible. The same is true of $4\tau_0$, $5\tau_0$, and indeed, of
$\sigma\tau_0$ for $\sigma = 1,2,3,\dots$.

Therefore, we obtain \emph{an infinite sequence of integral polynomial
factors $\pi_a(y,\sigma^k t_0)$ in $y$}. Each of them has a unique
index $a \in \{1,2,\dots, 2^n - 2\}$. Let these indices be
$a_1,a_2,a_3,\dots,a_s,\dots$. Then, by the pigeonhole principle,
\emph{at least one index $a_s$ occurs infinitely often}. In our
example above, we can take $a_s = 1$ or $a_s = 6$ and our sequence of
indices contains $1$ or $6$ or both infinitely often. The point
is the following.

% \noindent The corresponding formal polynomial $\pi_{a_{s}}(y,t)$ is a natural
% candidate for our integral polynomial factor.
% \\

\begin{center}
\fbox{\parbox{8cm}{The corresponding formal polynomial $\pi_{a_{s}}(y,t)$ is a natural
candidate for our integral polynomial factor.
}}
\end{center}

%%KWR531: Thought more consistent and better overall to put this in a box too.

\noindent 
To prove that it \emph{is} our integral polynomial factor, we must
verify that its Puiseux 
% series satisfies 
%%KWR602: Previously we had "series satisfy" here; compositor wanted "satisfies" but I think we really mean
expansions satisfy 
the three conditions of
Theorem~\ref{thm:nub}. The rest of Hilbert's paper (and ours) is 
% the proof that \emph{the candidate formal factor $\pi_{a_{s}}(y,t)$ satisfies these three conditions .}
%%KWR531: Now that we've admitted that the formal factor in section 11 can come out different from what we started with, it strikes me we should modulate this.
about pinning down \emph{a formal factor $\pi_{a_{s}}(y,t)$ that satisfies these three conditions.}

%%%%%%%%%%%%%%%%%%%%%%%%%%%%%%%%%%%%%%%%%%%%%%%%%%%%%%%%%%%%%%%%%%%%%%

\section{Framing the cube lemma.}
\label{sec:framing}

Let's consider the first condition of Theorem 16: \emph{The coefficients of all the
\emph{negative} powers of $\tau_0$ in the resulting fractional power
series for the coefficients of $\pi_{a_s}(y,\sigma^kt_0)$ are all
equal to zero}. 
%% KWR602: Important add---is it right?---to satisfy about n versus \nu and the notation.
Without loss of generality we may suppose $a_s$ indexes $\{y_1,\dots,y_{\nu}\}$ where $\nu < n$ is the degree of $\pi_{a_s}$.
Suppose the following system of coefficient power
series for $\pi_{a_s}(y,(\sigma^kt_0))$ has $a_s$ as its index:

\begin{align*}
y_1 + y_2 +\cdots+ y_\nu 
&= \mathrm{C}_{11}(\sigma\tau_0)^h 
+ \mathrm{C}_{12}(\sigma\tau_0)^{h-1} +\cdots+ \mathrm{C}_{1,h+1} 
\\
&\hspace*{5.8em}
+ \frac{\mathrm{D}_{11}}{\sigma\tau_0} 
+ \frac{\mathrm{D}_{12}}{(\sigma\tau_0)^2} 
+ \frac{\mathrm{D}_{13}}{(\sigma\tau_0)^3} +\cdots
\\[-2\jot]
 & \vdots
\\
y_1y_2\cdots y_\nu 
&= \mathrm{C}_{\nu1}(\sigma\tau_0)^{h\nu} 
+ \mathrm{C}_{\nu2}(\sigma\tau_0)^{h\nu-1} 
+\cdots+ \mathrm{C}_{\nu,h\nu+1} 
\\
&\hspace*{6.2em}
+ \frac{\mathrm{D}_{\nu1}}{\sigma\tau_0} 
+ \frac{\mathrm{D}_{\nu2}}{(\sigma\tau_0)^2} 
+ \frac{\mathrm{D}_{\nu3}}{(\sigma\tau_0)^3} +\cdots .
\end{align*}

%%KWR602: I kept "determinate" a little further down but feel "determined as" reads better here.  Also note changes to second sentence and the important query below.
The coefficients $\mathrm{C_{ij}},\mathrm{D_{ij}}$ are all completely
% determinate 
determined as
rational or irrational, real or complex 
% numbers; some of them have the value zero since the positive exponents of $\tau_0$ in general will be smaller than $(n - 1)h$.
numbers.  Some of
them must be zero because the positive exponents of $\tau_0$ in
general will be smaller than $(n - 1)h$.

%%KWR602: Do we have to change "n" to "\nu" in this last line?  

The variable quantity here is the integer $\sigma$. This suggests that
\emph{we rewrite the above fractional series as series in~$\sigma$}
and then obtain
\begin{align*}
y_1 + y_2 +\cdots+ y_\nu
&= C_{11}\sigma^h + C_{12}\sigma^{h-1} +\cdots+ C_{1,h+1} 
\\
&\hspace*{4.1em}
+ \frac{D_{11}}{\sigma} + \frac{D_{12}}{\sigma^2} 
+ \frac{D_{13}}{\sigma^3} +\cdots
\\[-2\jot]
 & \vdots
\\
y_1y_2 \cdots y_\nu
&= C_{\nu1}\sigma^{h\nu} + C_{\nu2}\sigma^{h\nu -1} 
+\cdots+ C_{\nu,h\nu+1}
\\
&\hspace*{4.6em}
+ \frac{D_{\nu1}}{\sigma} 
+ \frac{D_{\nu2}}{\sigma^2} + \frac{D_{\nu3}}{\sigma^3} +\cdots
\end{align*}
where the new coefficients $C_{ij},D_{ij}$ are again determinate numerical
quantities. Suppose that the index of the first occurrence of our
infinitely repeated polynomial factor is $s = \sigma = \mu$. Then
every repetition of the index $\mu$ produces the same $\nu$ power
series in $\sigma$, but with a \emph{larger} value of $\sigma$. Thus,
since there are an infinite number of such indices, \emph{there are infinitely many larger and larger values of $\sigma$ substituted into the power series.
}

%%KWR531: Not sure if/how the referee's comment about "n" versus "nu" is addressed.  We could say, "Hilbert's `$\nu$' is our `$n$' above." 

If we look at the series of \emph{negative} powers for any particular
coefficient, we see that for sufficiently large $\sigma$ it becomes
arbitrarily small in absolute value. Yet, the total power series takes
integral values for all of these values of $\sigma$. That suggests
that the total contribution of the negative powers, for large
$\sigma$, \emph{is an integer of arbitrarily small absolute value,
i.e., zero}.

Thus we might try to argue by contradiction as follows: Assume that
there \emph{are} nonzero coefficients of negative powers and deduce an
absurd conclusion. The possible hitch is that this inference ignores
the ``polynomial part'' of the coefficient series---which could
exactly compensate for a tiny nonzero contribution of the negative
powers.

To show that this is \emph{not} the case we would like to somehow
``eliminate'' the polynomial part of the coefficient series without
losing the property of being an integer for infinitely many values
of~$\sigma$. This suggests \emph{forming suitable linear combinations
of the coefficient series that successively subtract off the
principal terms of the polynomial parts, and leaving finally only
linear combinations of integer-valued series with negative
coefficients}.

To see how this would work, let's look at a typical coefficient
series. Let us choose any of the $\nu$ power series in the system
under consideration, say the power series
\[
\mathcal{P}(\sigma) 
= C_{11}\sigma^{m-1} + C_{12}\sigma^{m-2} +\cdots+ C_{1m} 
+ \frac{D_{11}}{\sigma} + \frac{D_{12}}{\sigma^2} 
+ \frac{D_{13}}{\sigma^3} + \cdots ,
\]
where we have written $m - 1$ for the highest power of~$\sigma$.  

Now comes a new insight. \emph{This is the insight that is key to the
whole proof}. Its simplicity belies the brilliance it took to think of
it. Professional mathematicians are aware of this phenomenon: the
deepest ideas, in the end, are based on a simple observation. The observation can be counterfactual and act as a catalyst.  Here is
Hilbert's: 

%\[    %%KWR531: Got centering by {center} rather than embed inside equation
\begin{center}
\fbox{\parbox{10cm}{Let us suppose that the series $\mathcal{P}(\sigma)$ takes on
integral values, not only for infinitely many values of $\sigma$,
\emph{but also for all the infinitely many values of 
$\sigma + \mu^{(1)}$, where $\mu^{(1)}$ is a fixed increment
independent of~$\sigma$}.}}
\end{center}
%\]

\noindent
To set it in motion, we form the linear combination
\[
\mathcal{P}^{(1)}(\sigma) 
:= \mathcal{P}(\sigma) - \mathcal{P}(\sigma + \mu^{(1)})
\]
and put the polynomial part equal to
\[
\varphi_{m-1}(\sigma) 
:= C_{11}\sigma^{m-1} + C_{12}\sigma^{m-2} +\cdots+ C_{1m}
\]
for brevity. We now start the argument-by-contradiction.

Suppose now that the other coefficients $D_{11},D_{12},D_{13},\dots$
of the power series $\mathcal{P}(\sigma)$ \emph{are not all zero} and
let $D_{1v}/\sigma^v$ be the first term whose coefficient $D_{1v}$
does \emph{not} vanish. Then
\[
\mathcal{P}^{(1)}(\sigma) = \varphi_{m-1}(\sigma) 
- \varphi_{m-1}(\sigma + \mu^{(1)}) 
+ D_{1v} \biggl[ \frac{1}{\sigma^v} 
- \frac{1}{(\sigma + \mu^{(1)})^v} \biggr] + \cdots .
\]
Here the first difference on the right-hand side \emph{is a polynomial
of degree $m - 2$ in~$\sigma$}; we put
\[
\varphi_{m-2}(\sigma) 
= \varphi_{m-1}(\sigma) - \varphi_{m-1}(\sigma + \mu^{(1)}).
\]
We expand the remaining terms on the right-hand side in decreasing 
powers of $\sigma$; then we obtain%
\footnote{The details, with simplified notation, are:
\begin{align*}
D_{1v} \biggl[ \frac{1}{\sigma^\nu} 
- \frac{1}{(\sigma + \mu)^\nu} \biggr]
&= \frac{D_{1v}}{\sigma^\nu} \biggl[
1 - \frac{1}{(1 + \frac{\mu}{\sigma})^{\nu}} \biggr]
\\
&= \frac{D_{1v}}{\sigma^\nu} 
\biggl[ 1 - \biggl\{ 1 + \binom{-\nu}{1} \frac{\mu}{\sigma} 
+ \binom{-\nu}{2} \biggl( \frac{\mu}{\sigma} \biggr)^2 
+ \cdots \biggr\} \biggr]
\\
&= \frac{D_{1v}}{\sigma^\nu} 
 \biggl[ \frac{\mu\nu}{\sigma} - \frac{\nu(\nu + 1)}{2}
\biggl( \frac{\mu}{\sigma} \biggr)^2 + \cdots \biggr]
\\
&= \frac{\mu\nu D_{1v}}{\sigma^{\nu+1}} \biggl[
1 - \frac{\nu + 1}{2} \biggl( \frac{\mu}{\sigma} \biggr)
+ \cdots \biggr],
\end{align*}
and this last expression on the right-hand side is equivalent to that
in the main body of the paper.}
% end of footnote
\[
\mathcal{P}^{(1)}(\sigma) 
= \varphi_{m-2}(\sigma) + \mu^{(1)}v\frac{D_{1v}}{\sigma^{v+1}} 
+\cdots \,.
\]
\emph{We have reduced the maximum degree of the polynomial part by one
unit}. Moreover, $\mathcal{P}^{(1)}(\sigma)$ \emph{takes on integral
values for infinitely many~$\sigma$}.

We have \emph{not} proved that such an increment $\mu^{(1)}$ exists,
but if we could, then we could make a first step in reaching our goal
of producing a series of negative powers of $\sigma$ that 
% is 
%%KWR531: I see you changed "which" to "that" but the change made me realize that the referent of "an integer" is "series".  So I changed the verb.
evaluates to
an integer for infinitely many values of~$\sigma$.

To carry out a similar program to \emph{reduce the maximum degree of
the polynomial part to $m - 3$, then to $m - 4$, and so on until
finally to zero}, we would have to have to prove the existence of $m$
fixed increments $\mu^{(k)}$, $k = 1,2,\ldots,m$ \emph{whose values
are all independent of $\sigma$ and such that if we substitute any of
the integers}
\[
\boxed{\begin{smallmatrix} %% usual {matrix} will not fit
\mu &  &  &  & 
\\
\mu + \mu^{(1)} &  &  &  & 
\\
\mu + \mu^{(2)} & \mu + \mu^{(1)} + \mu^{(2)} & & & 
\\
\mu + \mu^{(3)} & \mu + \mu^{(1)} + \mu^{(3)}
& \mu + \mu^{(2)} + \mu^{(3)} 
& \mu + \mu^{(1)} + \mu^{(2)} + \mu^{(3)} & 
\\
\vdots & \vdots & \vdots & \vdots \hspace*{2.5em} \cdots 
& \ddots \hfill 
\\
\mu + \mu^{(m)} & \mu + \mu^{(1)} + \mu^{(m)}
& \mu + \mu^{(2)} + \mu^{(m)} & \cdots \qquad \cdots
& \mu + \mu^{(1)} +\cdots+ \mu^{(m)}
\end{smallmatrix}}
\]
\emph{for $\sigma$ in $\mathcal{P}(\sigma)$, the values will all be
integers}.

Note that the proof of the existence of the increment $\mu^{(1)}$
amounts to proving that $a_{\mu}$ and $a_{\mu + \mu^{(1)}}$ are both
\emph{the same number in the set of indices~$a_s$}. A similar property
holds for the set of all the above sums of fixed increments
$\mu^{(k)}$ (for $k = 1,2,\dots,m$) with $\mu$, namely that they all
are the subscripts of \emph{the same number in the set of
indices~$a_s$}. In our running example they are all equal to~$1$ or
they are all equal to~$6$.

The proof of the existence of these increments \emph{is the content of the cube lemma}.
% \emph{Hilbert cube lemma}. 
% We now state it using his formulas much as he visualized them in his paper.
%%KWR531: Given the additions above, I think this is no longer where "we now state" it.  Hence change.
% My edit also caused LaTeX not to split the theorem across pages, which I think is desirable though it adds a spillover page to our paper.
To serve the context of Hilbert's proof, we restate it using his formulas much as he visualized them in his paper.

\begin{thm} % 11
Let $a_1,a_2,a_3,\ldots$ be an infinite sequence in which the general
term, $a_s$, is one of the $a$ positive numbers $1,2,\ldots,a$.
Moreover, let $m$ be any positive integer. Then we can always find $m$
positive integers $\mu^{(1)},\mu^{(2)},\ldots,\mu^{(m)}$ such that for
infinitely many integers $\mu$ the $2^m$ elements
\[
\setlength{\arraycolsep}{3pt} %% narrower column spacing
\boxed{\begin{matrix}
a_ \mu  &  &  &  & 
\\
a_{\mu + \mu^{(1)}} &  &  &  &
\\
a_{\mu + \mu^{(2)}} & a_{\mu + \mu^{(1)} + \mu^{(2)}}  &  &  & 
\\
a_{\mu + \mu^{(3)}} & a_{\mu + \mu^{(1)} + \mu^{(3)}} 
& a_{\mu + \mu^{(2)} + \mu^{(3)}}
& a_{\mu + \mu^{(1)} + \mu^{(2)} + \mu^{(3)}} & 
\\
\vdots & \vdots & \vdots & \enspace\vdots \hspace*{2.4em} \cdots 
& \ddots \hfill 
\\
a_{\mu + \mu^{(m)}} & a_{\mu + \mu^{(1)} + \mu^{(m)}}
& a_{\mu + \mu^{(2)} + \mu^{(m)}} & \cdots \qquad \cdots
& a_{\mu + \mu^{(1)} + \mu^{(2)} +\cdots+ \mu^{(m)}} 
\end{matrix}}
\]
are all equal to the same number $G$, where $G$ is one of the numbers
$1,2,\dots,a$.
\end{thm}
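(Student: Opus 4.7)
My plan is to deduce this infinitary statement directly from the finitary cube lemma (Lemma~\ref{le:cube}) by a straightforward pigeonhole argument on the finitely many possible increment-and-color tuples that can arise inside a bounded interval.

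First I would invoke Lemma~\ref{le:cube} with parameters $m$ and $c = a$ to obtain a constant $H = H(m,a)$ such that every interval of length $H$ in $\Nat^+$ contains a monochromatic $m$-cube for the coloring defined by $(a_s)$. Partitioning $\Nat^+$ into disjoint consecutive blocks $I_1, I_2, I_3, \ldots$ of length $H$, the cube lemma supplies, for each $k$, a starting point $\mu_k \in I_k$, increments $\mu^{(1)}_k, \ldots, \mu^{(m)}_k$, and a color $G_k \in \{1,\ldots,a\}$ such that the cube $C(\mu_k; \mu^{(1)}_k, \ldots, \mu^{(m)}_k)$ lies entirely inside $I_k$ and every one of its $2^m$ elements is assigned the value $G_k$ by the sequence.

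Because the entire cube fits inside a length-$H$ interval, each $\mu^{(j)}_k$ is at most $H - 1$. Hence the tuple $(\mu^{(1)}_k, \ldots, \mu^{(m)}_k, G_k)$ takes values in a finite set of cardinality at most $a \cdot H^m$. By the pigeonhole principle, some particular tuple $(\mu^{(1)}, \ldots, \mu^{(m)}, G)$ must occur for an infinite set $K$ of indices $k$. Since the blocks $I_k$ are pairwise disjoint, the starting points $\{\mu_k : k \in K\}$ form an infinite set of positive integers, and each $\mu_k$ with $k \in K$ serves as a value of $\mu$ in the statement, with the same fixed increments $\mu^{(1)}, \ldots, \mu^{(m)}$ and the same common value $G$. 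This is exactly the desired conclusion.

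I do not foresee a serious obstacle. The only delicate point is the bound $\mu^{(j)}_k \leq H - 1$, which is automatic from the cube being contained in its block. The argument is really the standard device for lifting a finitary Ramsey-type lemma to its infinitary counterpart by iterating over disjoint blocks and applying pigeonhole to the finitely many configurations that can occur inside a single block.
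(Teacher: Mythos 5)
Your argument is correct and is exactly the deduction the paper has in mind: in Section~\ref{sec:cube-lemma} the paper asserts that the infinitary statement ``is implied by'' Lemma~\ref{le:cube} but never spells out the derivation, and your block-decomposition plus pigeonhole on the at most $a\cdot H^m$ possible $(\mu^{(1)},\dots,\mu^{(m)},G)$ configurations is the standard, correct way to fill that gap. The key point you flag --- that containment in a length-$H$ block forces the increments (and hence the whole configuration) to range over a finite set --- is precisely what makes the pigeonhole go through, so the proposal matches the paper's intended approach.
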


Thus we see that the statement \emph{arises naturally from the
necessity of proving that the coefficients of the negative powers of
$\sigma$ must be all equal to zero}. It is the strengthened form of
the pigeonhole principle we mentioned earlier. It is stronger because
it imposes a \emph{structure} on the distribution of infinitely many
common values $a_s$ whereas the pigeonhole principle only implies
their \emph{existence}.

%%%%%%%%%%%%%%%%%%%%%%%%%%%%%%%%%%%%%%%%%%%%%%%%%%%%%%%%%%%%%%%%%%%%%%

\section{The Coefficients of the Negative Powers of $\sigma$ are Zero.}
\label{sec:neg}

Employing the idea above, we form the following $m$ linear
combinations:
\begin{align*}
\mathcal{P}^{(1)}(\sigma)
&= \mathcal{P}(\sigma) - \mathcal{P}(\sigma + \mu^{(1)}),
\\
\mathcal{P}^{(2)}(\sigma)
&= \mathcal{P}^{(1)}(\sigma) - \mathcal{P}^{(1)}(\sigma + \mu^{(2)}),
\\[-\jot]
 \qquad &\qquad \vdots
\\
\mathcal{P}^{(m)}(\sigma) 
&= \mathcal{P}^{(m-1)}(\sigma) 
- \mathcal{P}^{(m-1)}(\sigma + \mu^{(m)}). 
\end{align*}
It follows from what we proved earlier that each of these $m$ power
series also assumes integer values for infinitely many integral
arguments $\sigma = \mu$.

As we indicated, assuming the cube lemma, we obtain
\[
\mathcal{P}^{(2)}(\sigma) = \varphi_{m-3}(\sigma) 
+ \mu^{(1)}\mu^{(2)}v(v + 1) \frac{D_{1v}}{\sigma^{v+2}} + \cdots \,,
\]
where $\varphi_{m-3}(\sigma)$ is a polynomial in $\sigma$ of degree
$m - 3$. \emph{After $m$ steps we finally arrive at the formula}
\[
\mathcal{P}^{(m)}(\sigma) 
= \mu^{(1)} \mu^{(2)} \cdots \mu^{(m)} v(v + 1) \cdots
(v + m - 1) \frac{D_{1v}}{\sigma^{v+m}} + \cdots \,.
\]

Since this power series begins with negative powers of $\sigma$, we
can find a positive number $\Gamma$ such that for all values of
$\sigma$ that exceed $\Gamma$ the absolute value of the power series
\emph{will be smaller than one}. On the other hand, the power series
$\mathcal{P}^{(m)}(\sigma)$ is itself \emph{equal to an integer for
infinitely many arguments} $\sigma$ and since an integer whose
absolute value is less than one is necessarily equal to zero, it
follows that \emph{there are infinitely many integers $\sigma$ for
which the power series vanishes}.

But, our last formula shows us that 
\[
\lim_{\sigma\to\infty}
\left[ \sigma^{v+m}\mathcal{P}^{(m)}(\sigma) \right]
= \mu^{(1)} \mu^{(2)} \cdots \mu^{(m)} v(v + 1) \cdots
(v + m - 1) D_{1v},
\]
where the expression on the right-hand side represents a quantity
\emph{different from zero}. This last result stands in
\emph{contradiction} with the conclusion proved above, and therefore
\emph{it is impossible that a nonzero coefficient $D_{1v}$ occurs
among the coefficients $D_{11},D_{12},D_{13},\dots$}. It follows in
the same way that \emph{also the coefficients
$D_{2i},D_{3i},D_{4i},\dots,D_{\nu i}$ must all be equal to zero}.

This completes the proof of the first condition of Theorem~\ref{thm:nub} about the Puiseux
expansions of the coefficients of $\pi_{a_s}(y,t)$.
\qed

\medskip

This step was the heart of Hilbert's proof and his paper's most
brilliant insight. The other parts are clever too, but in our opinion
this best shows his penetrating originality.

%%%%%%%%%%%%%%%%%%%%%%%%%%%%%%%%%%%%%%%%%%%%%%%%%%%%%%%%%%%%%%%%%%%%%%

\section{The Coefficients of the Polynomial Part are Rational Numbers.}
\label{sec:rat}

The next condition of Theorem~\ref{thm:nub} to be verified is:
\emph{the numerical coefficients in the polynomial part of the Puiseux
expansions of the coefficients of $\pi_{a_s}(y,t)$ are rational
numbers}. Our expansion has collapsed to the polynomial part, i.e. 
\begin{equation}
\label{poly}
\mathcal{P}(\sigma) 
= C_{11}\sigma^{m-1} + C_{12}\sigma^{m-2} +\cdots+ C_{1m},
\end{equation}
where the right-hand side assumes integer values for infinitely many
values of $\sigma$. If we set the right-hand side equal to these
integers for $m$ values of $\sigma$, we obtain $m$ linear equations
in $m$ unknowns $C_{11},C_{1,2},\dots,C_{1m}$ which have a
\emph{rational solution} by Cramer's rule. 
% This finishes the proof of the condition---we do not have to replace ``rational'' with ``integral'' because Proposition~\ref{prop:Que2Zed} showed that ``rational'' is enough.
%%KWR531: Tweaked to achieve page-saving break.
By Proposition~\ref{prop:Que2Zed}, getting ``rational'' suffices to prove the condition.
\qed

%%%%%%%%%%%%%%%%%%%%%%%%%%%%%%%%%%%%%%%%%%%%%%%%%%%%%%%%%%%%%%%%%%

\section{Only Integral Powers of \lowercase{$t$}\,.}
\label{sec:int}

The final condition of Theorem~\ref{thm:nub} to be verified is:
\emph{the only nonzero terms in the polynomial part of the Puiseux
expansions of the coefficients of $\pi_{a_s}(y,t)$ are those with
integral powers of~$t$}.

%Take $\tau_0$ to be a \emph{prime} number $p$ larger than $C''$ and recall $\sigma\tau_0=\tau.$
%%
%We now determine $2^n - 2$ distinct prime numbers
%$p',p'',\dots,p^{2^n-2}$ all greater than $p$. Then also for each of
%these prime numbers there exists at least one among the $2^n - 2$
%formal factors whose coefficients have the above polynomial form
%\eqref{poly}. However, since the number of prime numbers
%$p,p',p'',\dots,p^{2^n-2}$ is equal to $2^n - 1$ while the number
%formal factors only reaches $2^n - 2$, necessarily \emph{there must
%exist at least one formal factor admitting a double representation by
%these polynomials} \eqref{poly}. That is to say, as above:

%%KWR531: I don't see any place where the prime numbers are "determined", so I think simply "select" was meant.  
%% I made some other word changes---please check that I preserved correctness.
%% Finally, I put p' inside parentheses when raising to a power.

Take $\tau_0$ to be a \emph{prime} number larger than $t_0$ in the statement of 
Theorem~\ref{thm:goal} and recall that $\sigma\tau_0 = \tau.$
We now select $2^n - 2$ distinct prime numbers $p_{\ell}$ all greater than $\tau_0$.
For each of $\tau_0$ and the $p_{\ell}$, at least one among the $2^n - 2$
formal factors has coefficients in the above polynomial form
\eqref{poly}. However, since we have $2^n - 1$ primes and
only $2^n - 2$ formal factors, \emph{there must
exist at least one formal factor admitting a double representation by
these polynomials} \eqref{poly}. That is to say, there are distinct primes $p,p' > t_0$ such that
\begin{align*}
y_1 + y_2 +\cdots+ y_\nu
&= A_{11} p^{-(m-1)/k} \tau^{m-1} 
+ A_{12}p^{-(m-2)/k} \tau^{m-2} +\cdots+ A_{1m}
\\   
 & \vdots
\\
y_1 y_2 \cdots y_{\nu}
&= A_{\nu1} p^{-(m-1)/k} \tau^{m-1} 
+ A_{\nu2} p^{-(m-2)/k} \tau^{m-2} +\cdots+ A_{\nu m},
\\
\intertext{and simultaneously,}
y_1 + y_2 +\cdots+ y_\nu
&= A'_{11} (p')^{-(m-1)/k} \tau^{m-1}
+ A'_{12} (p')^{-(m-2)/k} \tau^{m-2} +\cdots+ A'_{1m}
\\   
 & \vdots
\\
y_1 y_2 \cdots y_\nu
&= A'_{\nu1} (p')^{-(m-1)/k} \tau^{m-1}
+ A'_{\nu2} (p')^{-(m-2)/k} \tau^{m-2} +\cdots+ A'_{\nu m} \,.
\end{align*}    

\noindent 
Since, by Puiseux's theorem, the coefficients of the powers of $\tau$ are unique, if we equate coefficients of equal powers of $\tau$ on the right-hand sides we obtain:
\[
\begin{aligned}
A_{11}p^{-(m-1)/k}  
&= A'_{11} (p')^{-(m-1)/k}, & \cdots \quad A_{\nu 1} p^{-(m-1)/k} &= A'_{\nu 1} (p')^{-(m-1)/k} 
\\
A_{12}p^{-(m-2)/k}  
&= A'_{12} (p')^{-(m-2)/k}, & \cdots \quad A_{\nu 2} p^{-(m-2)/k} &= A'_{\nu 2} (p')^{-(m-2)/k} 
\\
 &\vdots  &  &\vdots\quad \quad & \quad
\\
A_{1m} &= A'_{1m}, &  \quad A_{\nu m} &= A'_{\nu m} \, .
\end{aligned}
\]

% Because the $A$ and $A'$ coefficients are all rational
%numbers and $p$ and $p'$ are distinct prime numbers, the
%equations above show us that \emph{the only coefficients that can be
%different from zero are those for which the corresponding exponent of
%$\tau$ must be an integer divisible by~$k$}.

%%BILLANDKEN: We've already addressed what the referee said explicitly by the new t_1 usage when "t" is a constant.  Below we fill in one more point of reasoning whose absence maybe contributed to the referee's not following.  It is a slightly more subtle point than sqrt{2} being irrational, anyway.  It still bridges to the words in italics which you had next.

\noindent
The final numerical point is that for any rational number $r$ that is not an integer, and distinct primes $p$ and $p'$, the ratio $p^r /p'^r$ is irrational.  Suppose the ratio were equal to the rational number $c/d$ in lowest terms, and let us also put $r = k/\ell$ in lowest terms.  Cross-multiplying then gives 
\[
d^{\ell} p^k = c^{\ell} p'^k.
\]
For this to happen, $p$ must divide $c$.  Let $a$ be the largest integer such that $p^a$ divides $c$.  Then we must have $\ell = ak$ in order to cancel out all factors of $p$.  This means that $\ell/k$ must be an integer, contradicting the condition on $r$.  
%
%Thus if $\ell$ is any of the numerators involving  $m$ in exponents above and we have 
%\[
%A p^{\ell/k} = A' p'^{\ell/k},
%\]
%the exponent $\ell/k$ must be an integer.
  
We can apply this where $\ell$ is a numerator of one of the exponents of $p$ and $p'$, such as $\ell = m-1$, because we established in Section~\ref{sec:rat} that the coefficients $A_{i,j}$ of the polynomial part are all rational.  We conclude that \emph{the only coefficients that can be different from zero are those for which the corresponding exponent of
$\tau$ must be an integer divisible by~$k$}.

It follows that the power
series of our system \emph{are polynomials in $\tau^k$ with rational
coefficients}, and if we put $\tau^k = t$, we obtain
\begin{align*}
y_1 + y_2 +\cdots+ y_\nu &= F_1(t)
\\[-\jot]  
\vdots\qquad &
\\
y_1 y_2 \cdots y_\nu &= F_\nu(t),    
\end{align*}
where $F_1(t),\dots,F_{\nu}(t)$ are \emph{polynomials in $t$
with rational coefficients}.

%%KWR531: Hemmed and hawed about preserving the colon before this last ensemble of equations.

This completes the proof of the third condition of Theorem~\ref{thm:nub} of the Puiseux
expansions of the coefficients of some formal factor $\pi_{a_s}(y,t)$.
\qed

\bigskip
We note that the final formal factor $\pi_{a_s}(y,t)$ is not necessarily the same one that we started with.
% We have shown that a suitable formal factor $\pi_{a_s}$ 
All we needed was that it fulfills the three
conditions of Theorem~\ref{thm:nub}, and therefore the proof of Theorem~\ref{thm:goal} is complete.
\qed

%%%%%%%%%%%%%%%%%%%%%%%%%%%%%%%%%%%%%%%%%%%%%%%%%%%%%%%%%%%%%%%%%%%
%% KWR 11/3: Applied same clarification about "cube lemma of irred.
%% theorem?" to section title

\section{Later Proofs of the Irreducibility Theorem.}
\label{sec:later}
After Hilbert, many mathematicians offered other proofs of the
irreducibility theorem.

Most of the modern proofs of the (two-variable) irreducibility
theorem are based on that of Karl D\"orge~\cite{Dorge}, which sharpened an
idea of Thoralf Skolem~\cite{skolem}. 
D\"orge proved it without using the
cube lemma and obtained a stronger result.  To begin contrasting his and Hilbert's results, recall Hilbert's
statement that if $f\in\Zed[x,y_1,\ldots,y_s]$ is irreducible,
then for infinity many
$t_1,\ldots,t_s \in \Zed$, $f(x,t_1,\ldots,t_s)$ is irreducible as a member of $\Zed[x]$.

Now let $|f|$ be the maximum of $8$ and the absolute values of the coefficients of $f$.
(The reason for insisting $|f| \ge 8$ is technical.)
A simplified statement of D\"orge's theorem is the following.

%\begin{thm}\label{thm:Dorge}
%There is a function $c(d,s)$ such that the following holds.
%Let $f\in\Zed[x,t_1,\ldots,t_s]$ be irreducible of degree $d$.
%Let $N>|f|^{c(d,s)}$.
%Then the number of $(a_1,\ldots,a_s)\in \{-N,\ldots,N\}^s$ such that 
%$f(x,a_1,\ldots,a_s)$ is not irreducible is at most $|f|^{c(d,s)}N^{s-(1/2)}\log N$.
%\end{thm}

\begin{thm}\label{thm:Dorge}
There is a function $c(d,s)$ such that the following holds.
Let $f\in\Zed[x,y_1,\ldots,y_s]$ be irreducible of degree $d$.
Let $N>|f|^{c(d,s)}$.
Then the number of $(t_1,\ldots,t_s)\in \{-N,\ldots,N\}^s$ such that 
$f(x,t_1,\ldots,t_s)$ is not irreducible is at most $|f|^{c(d,s)}N^{s-(1/2)}\log N$.
\end{thm}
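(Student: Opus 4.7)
The plan is to prove Theorem~\ref{thm:Dorge} by induction on $s$, with the base case $s=1$ carried out via the Puiseux-series framework of Sections~\ref{sec:pseries}--\ref{sec:factors} augmented by a quantitative Diophantine estimate, and the inductive step reducing bad tuples in $\{-N,\dots,N\}^s$ to bad lower-dimensional slices.

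For the base case $s=1$, let $f(x,t)\in\Zed[x,t]$ be irreducible of degree $d$. The $d$ roots admit Puiseux expansions $y_1(t),\dots,y_d(t)$ at infinity, and for each proper nonempty $A\subset\{1,\dots,d\}$ the formal factor $\pi_A(x,t)=\prod_{i\in A}(x-y_i(t))$ has Puiseux-series coefficients. If $f(x,t_1)$ is reducible in $\Zed[x]$, then some $\pi_A(x,t_1)$ lies in $\Zed[x]$, so at least one coefficient series of that $\pi_A$ evaluates to an integer at $t=t_1$. Since $f$ is irreducible in $\Zed[x,t]$, the analysis of Sections~\ref{sec:neg}--\ref{sec:int} guarantees that for every proper $A$ some coefficient series $\phi_A(t)$ contains either a fractional power of $t$ or an integer power with non-integer rational (or irrational) coefficient. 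The base case thus reduces to a uniform Diophantine estimate: for each such $\phi_A$, the number of $t_1\in\{-N,\dots,N\}$ with $\phi_A(t_1)\in\Zed$ is at most $|f|^{c(d)}\sqrt{N}\log N$. I would prove this by splitting on the leading nonintegral term: an irrational coefficient yields a Weyl-type equidistribution bound on $\phi_A(t_1)\bmod 1$; a rational non-integer coefficient with denominator $q$ confines solutions to $O(q)$ arithmetic progressions, and the tail $\eta(t)=O(t^{-\alpha})$ then forces $t_1$ to lie close to zeros of an explicit auxiliary polynomial, of which there are $O(\sqrt{N})$; fractional exponents are handled by the substitution $t=\tau^k$ and reduction to the previous cases. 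The heights of the Puiseux coefficients are controlled polynomially in $|f|$ via Eisenstein's theorem for algebraic power series.

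For the inductive step with $s\ge 2$, partition the bad tuples $(t_1,\dots,t_s)$ into two groups. If $f(x,y_1,t_2,\dots,t_s)$ remains irreducible in $\Zed[x,y_1]$, a multivariable analogue of the base case (with $y_1$ regarded as an additional kept variable) gives at most $|f|^{c(d,1)}\sqrt{N}\log N$ bad $t_1$ for each of the $O(N^{s-1})$ such tuples $(t_2,\dots,t_s)$, contributing $|f|^{c(d,1)}N^{s-1/2}\log N$ overall. For the remaining ``exceptional'' tuples --- those making the slice reducible in $\Zed[x,y_1]$ --- the inductive hypothesis applied to $f$ viewed as an irreducible polynomial in $(x,y_1)$ over $\Zed[y_2,\dots,y_s]$ bounds their number by $|f|^{c(d,s-1)}N^{(s-1)-1/2}\log N$; multiplying by $(2N+1)$ choices of $t_1$ adds at most $|f|^{c(d,s-1)}N^{s-1/2}\log N$. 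Choosing $c(d,s)$ large enough to dominate both contributions closes the induction.

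The principal obstacle will be the Diophantine estimate in the base case: obtaining the $\sqrt{N}\log N$ bound with an explicit polynomial dependence on $|f|$ requires controlling heights and denominators of Puiseux coefficients (via Eisenstein- or Gelfond-type majorants for implicit algebraic functions) and translating them into uniform irrationality measures or explicit gap bounds for near-integer values. Everything else --- the symmetric-function bookkeeping, the union bound over the $2^d-2$ formal factors, and the induction on $s$ --- is routine once this estimate is in hand.
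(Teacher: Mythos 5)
First, note that the paper does not prove Theorem~\ref{thm:Dorge}: it is stated in Section~\ref{sec:later} as a simplified form of D\"orge's theorem \cite{Dorge}, with the reader referred to Lang \cite{diogeom,diogeomfund} and Prasolov \cite{polynomials} for expositions. Your overall plan---pass to the Puiseux expansions of the $2^d-2$ formal factors, reduce reducibility of a specialization to some coefficient series taking an integer value, count such integer values, and then induct on $s$---is indeed the Skolem--D\"orge density strategy that those expositions follow, and it is the right kind of argument (as the paper emphasizes, this route avoids the cube lemma entirely).

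However, two steps as you describe them would fail. First, the base-case counting lemma is essentially the entire content of the theorem, and the tools you name do not deliver it: Weyl equidistribution for a polynomial with an irrational algebraic leading coefficient gives only $o(N)$, and making it quantitative via a Liouville-type irrationality measure yields exponents far weaker than $1/2$ and with no control of the shape $|f|^{c(d)}$. The standard mechanism is different: if the algebraic function $\phi_A$ is not a polynomial, its Puiseux tail is of order $t^{-\alpha}$ with $\alpha>0$, and an interpolation-determinant (divided-difference) argument shows that consecutive integers $t_1<t_2$ in $[N/2,N]$ with $\phi_A(t_1),\phi_A(t_2)\in\Zed$ must satisfy $t_2-t_1\gg N^{1/2}$ up to logarithms; it is that gap estimate, not equidistribution, that produces $N^{1/2}\log N$. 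Second, the induction on $s$ does not close as stated. Your inductive hypothesis bounds tuples for which the specialization is reducible in $\Zed[x]$, but the ``exceptional'' set you need to control consists of $(t_2,\ldots,t_s)$ for which $f(x,y_1,t_2,\ldots,t_s)$ is reducible in $\Zed[x,y_1]$---a statement with two retained variables, which the hypothesis does not cover; you must strengthen the theorem to allow several retained variables, or reduce to one retained variable by a Kronecker-type substitution, before inducting. Worse, applying the base case to each slice $g=f(x,y_1,t_2,\ldots,t_s)$ is not legitimate as written, since $|g|$ can be of order $|f|\cdot N^{d}$, so the required hypothesis $N>|g|^{c(d,1)}$ fails and the per-slice bound $|g|^{c(d,1)}N^{1/2}\log N$ exceeds $N$. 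Uniformity of the one-parameter estimate in the height of the slice is exactly the delicate point that D\"orge's proof is organized around, and your outline defers it rather than resolving it.
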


%\noindent
%Note that the number of such $(a_1,\ldots,a_s)$ has density 0.
%D\"orge actually presented a generalization of this theorem where he replaces
%$\Zed$ with the integers of a finite extension of a number field.

\noindent
Note that the number of such $(t_1,\ldots,t_s)$ has density 0.
D\"orge actually presented a generalization of this theorem where he replaces
$\Zed$ with the integers of a finite extension of a number field.

%D\"orge also showed (in fact, this was his primary interest) that if $f$, viewed
%as an element of $\Zed[t_1,\ldots,t_s][x]$, has 
%Galois group $G$,
%then the number of $(a_1,\ldots,a_s)\in \{-N,\ldots,N\}^s$ such that 
%$f(x,a_1,\ldots,a_s)$ does not have Galois group $G$ is at most
%$|f|^{c(d,s)}N^{s-(1/2)}\log N$. 
%And again, he actually presented a generalization of this theorem 
%that replaces
%$\Zed$ with the integers of a finite extension of a number field.

D\"orge also showed (in fact, this was his primary interest) that if $f$, viewed
as an element of $\Zed[y_1,\ldots,y_s][x]$, has 
Galois group $G$,
then the number of $(t_1,\ldots,t_s)\in \{-N,\ldots,N\}^s$ such that 
$f(x,t_1,\ldots,t_s)$ does not have Galois group $G$ is at most
$|f|^{c(d,s)}N^{s-(1/2)}\log N$. 
And again, he actually presented a generalization of this theorem 
that replaces
$\Zed$ with the integers of a finite extension of a number field.

Lang~\cite{diogeom,diogeomfund} and
Prasolov~\cite{polynomials} have expositions of D\"orge's proof.
Franz~\cite{Franz} also gave a proof that does not use the cube lemma,
and this is expounded further by Schinzel~\cite{polynomialssel}. There
is a another proof by Fried~\cite{fried}. 
Serre~\cite{serre} recasts these results in geometric terms and 
presents results about which groups can be Galois groups.

%%%%%%%%%%%%%%%%%%%%%%%%%%%%%%%%%%%%%%%%%%%%%%%

\section{Conclusion: Hilbert's World.}
\label{sec:conc}

We have shown the significance of the cube lemma in the context of
Hilbert's original paper. We return to the questions of how Hilbert might have
expanded it in the direction of Ramsey theory and why he didn't.  That Hilbert was
the world's master in the relationship between number theory and logic until G\"odel emerged, 
while Ramsey was motivated by a problem in logic, gives more reason to ask.
We close with a speculative answer: The world in which
Hilbert was immersed was and remains at least one level of exponentiation higher than the ground floor of Ramsey theory.

Recall from Section~\ref{sec:cube-lemma} that
we defined the ``Hilbert Cube Numbers'' $H(m,c)$ to be the least $H$ such that
every $c$-coloring of $\{1,\dots,H\}$ has a monochromatic $m$-cube.
%%KWR906: Actually we said "every H-length interval" for convenience in the
%%inductive proof---can we expect the reader to get the equivalence?
The best known upper and lower bounds appear still to be those of
Gunderson and R\"odl~\cite{GR}:
\[
c^{(1 - \epsilon_c)(2^m - 1)/m} \leq H(m,c) \leq (2c)^{2^{m-1}},
\]
where $\epsilon_c \to 0$ as $c \to \infty$.\footnote{%
The same upper bound was
recently ascribed to \cite{GRS2} by Conlon, Fox, and Sudakov
\cite{CFS} but see also S\'andor~\cite{sandor} with different
asymptotics.
} %% end of footnote
This establishes doubly-exponential growth of $H(m,c)$ in $m$ for any fixed $c$, in
contrast to the singly-exponential growth of the Ramsey numbers, in particular
\[
R_2(m) \leq \binom{2m-2}{m-1} \leq \frac{4^m}{\sqrt{m}}.
\]
%I find this at http://math.mit.edu/~fox/MAT307-lecture06.pdf
Moreover, while Erd\H{o}s and Tur\'an~\cite{et} proved that $H(2,c)$
is asymptotic to $c^2$, much less is known about $H(m,c)$ for fixed $m \geq 3$ 
(see remarks in \cite{BCEG} which appear still in force), and in
\cite{CFS} it is noted that $H(m,2)$ depends on unknown properties of van
der Waerden numbers.  This all puts $H(m,c)$ on a higher and harder plane
than analogous cases of $R_c(m)$ in the Ramsey world.  So what world was Hilbert in?

%%KWR531: Which was its fatal attraction to me starting in mid-1997 when I fatefully visited Ernst Mayr in Munich before Complexity'97 in Ulm.  The extra exponential is exactly what makes this world---including Mulmuley's world---capable of surmounting the "Natural Proofs" barrier (and "Algebrization" too).  

%The question of whether Hilbert might have
%expanded on it bids comparison with Ramsey's motivation
%in~\cite{ramsey30}. That was a problem in logic not number theory
%\emph{per se}. But Hilbert was the world's master in the relationship
%between number theory and logic until G\"odel emerged, so one may ask
%again why Hilbert didn't pursue this further or develop areas of
%extremal combinatorics \emph{vis-\`a-vis} logic in the direction of
%Ramsey theory. We close with a speculative answer: The world in which
%Hilbert was immersed 
% is as different from that of Ramsey theory as ``doubly-exponential'' is from ``singly-exponential.''
%was and remains at least one level of exponentiation higher than the ground of Ramsey theory.

The years 1890--1893 saw the publication of Hilbert's great
foundational works in commutative algebra, including his basis theorem
and \textit{Nullstellensatz} \cite{Hilbert1890,Hilbert1893}. A common
thread is the notion of \emph{regularity}: given
a finitely-specified system of elements that may have arbitrarily
large values of some parameter $t$ (such as the degrees of certain polynomials
over a ring), there is some integer $t_0$ such that, for all 
$t \geq t_0$, the system conforms to a simple description. Hilbert
first proved his basis theorem nonconstructively. Later was it shown
that the growth of the relevant $t_0$ (in terms of the degrees $d$ of
basis elements or the $n$-variable equations in the
\textit{Nullstellensatz}) is doubly-exponential, of order up to
$d^{2^n}$.  We could try to equate the growth of $H(m,c)$ with 
that of Ramsey numbers by regarding the cube as a graph of size $M = 2^m$ and saying
$H(m,c)$ is ``singly exponential in $M$.'' But $m$, not $M$, is still the
natural parameter, just as with $n$, not $2^n$, in the \textit{Nullstellensatz}.

%%KWR531: I changed this again

Hence our answer is simply that Hilbert was occupied with
more rarefied levels of algebra and analysis where nonconstructive methods were often more salient
than double-exponential effective ones. Irreducibility of polynomials plays into irreducible
varieties and primary decompositions of polynomial ideals, which
Hilbert's student Emanuel Lasker (the world chess champion) and
% prot\'eg\'e 
colleague 
Emmy Noether built upon for some great work in the next
two decades that became more algorithmic. 
%
%%KWR531: My use of "protege" is reflected by Wikipedia's words in her bio:
%% "In 1915, she was invited by David Hilbert and Felix Klein to join the mathematics department at the University of Göttingen, a world-renowned center of mathematical research. The philosophical faculty objected, however, and she spent four years lecturing under Hilbert's name."
%% I wanted to list Hilbert's student first and connect Noether to Hilbert, but I guess "protege" came off as sexist.
%
In the meantime, Hilbert swooped down to the
ground-level task of formalizing Euclid's geometry in the later 1890s,
which presaged his work on formal logic.

The divide in purpose and growth rate does not ward us off from
appreciating the cube numbers and seeking other uses for them. That is
why we have devoted this paper to expounding their original use and
context. We have highlighted how the cube lemma completed an insight
about estimates by infinite series. We hope that our exposition will
foster a greater appreciation of combinatorial underpinnings of more
``analytical'' areas of mathematics.
\\
\\
\textbf{Acknowledgements}.
We thank Joseph P. Varilly for formatting help and the referees for helpful comments that greatly improved the exposition and clarity.

%%BILLANDKEN: OK to ack. Varilly like that?  Anything else?

\end{document}